\numberwithin{equation}{section}
\newtheorem{thm}[equation]{Theorem}
\newtheorem{pro}[equation]{Proposition}
\newtheorem{cor}[equation]{Corollary}
\newtheorem{lem}[equation]{Lemma}
\theoremstyle{definition}
\newtheorem{ex}[equation]{Example}
\newtheorem{defn}[equation]{Definition}
\newtheorem{ob}[equation]{Observation}
\newtheorem{rem}[equation]{Remark}
\def\alp{\alpha}
\def\la{\lambda}
\def\ga{\gamma}
\def\bZ{\mathbb Z}
\def \bQ{\mathbb Q}
\def\op{\operatorname}
\begin{document}

\title{Poset-stratified space structures of homotopy sets} 

\author{ Toshihiro Yamaguchi \ \ and \ \ Shoji Yokur$\mbox{a}^*$}
\thanks{2010 MSC: 06A06,18B35, 18B99, 54B99, 55P62, 55P99, 55N99.\\
Keywords: Alexandroff space, dependence of maps, dependence of cohomology, homotopy set, poset, poset-stratified space, Sullivan minimal model.
\\
(*) Partially supported by JSPS KAKENHI Grant Numbers 16H03936}

\date{}

\address{Faculty of Education, Kochi University, 2-5-1,Kochi, 780-8520, Japan} 
\email{tyamag@kochi-u.ac.jp}

\address{Department of Mathematics  and Computer Science, Graduate School of Science and Engineering, Kagoshima University, 1-21-35 Korimoto, Kagoshima, 890-0065, Japan
}

\email{yokura@sci.kagoshima-u.ac.jp}

\maketitle 

\begin{abstract} A poset-stratified space is a pair $(S, S \xrightarrow \pi P)$ of a topological space $S$ and a continuous map $\pi: S \to P$ with a poset $P$ considered as a topological space with its associated Alexandroff topology. In this paper we show that one can impose such a poset-stratified space structure on the homotopy set $[X, Y]$ of homotopy classes of continuous maps by considering a canonical but non-trivial order (preorder) on it, namely we can capture the homotopy set $[X, Y]$ as an object of the category of poset-stratified spaces. The order we consider is related to the notion of \emph{dependence of maps} (by Karol Borsuk). Furthermore via homology and cohomology the homotopy set $[X,Y]$ can have other poset-stratified space structures. In the cohomology case, we get some results which are equivalent to the notion of \emph{dependence of cohomology classes} (by Ren\'e Thom) and we can show that the set of isomorphism classes of complex vector bundles can be captured as a poset-stratified space via the poset of the subrings consisting of all the characteristic classes.  We also show that some invariants such as Gottlieb groups and Lusternik--Schnirelmann category of a map give poset-stratified space structures to the homotopy set $[X,Y]$.
\end{abstract}

%%%%%%%%%%%%%%
\section{Introduction}
The homotopy set $[X,Y]$ is the set of homotopy classes of continuous maps from a topological space $X$ to another one $Y$.
In our previous work \cite{YY} we consider a preorder on the homotopy set $[X,Y]$ using the action of the self-homotopy equivalences $\mathcal E[X]$ of $X$ and the self-homotopy equivalences $\mathcal E[Y]$ of $Y$ on $[X,Y]$. Using such a preordered set (proset), we consider some classification of Hurewicz fibrations.

In this paper we consider another preorder on $[X,Y]$ via the action of monoids $[X,X]$ and $[Y,Y]$ on $[X,Y]$, instead of $\mathcal E[X]$ and $\mathcal E[Y]$. Here we note that a homotopy class $[f] \in \mathcal E[X]$ has its inverse $[f]^{-1} \in \mathcal E[X]$, but a homotopy class $[f] \in [X,X]$ does not always have an inverse $[f]^{-1} \in [X,X]$, which is a substantial difference between $[X,X]$ and $\mathcal E[X]$. For example, we consider the following order:
$$[f] \leqq_R [g] \Longleftrightarrow \exists [s] \in [X, X] \,\,\, \text{such that $[f] = [g] \circ [s]$},$$
i.e. the following diagram commutes up to homotopy ($f \sim g \circ s$):
$$\xymatrix
{
X \ar [d]_s \ar [r]^{f} & Y\\
X \ar [ur] _{g} & }.
$$
This is a preorder.
Then we consider the following equivalence relation $\sim_R$ using this preorder $\leqq_R$:
$$\text{$[f] \sim_{R} [g]  \Longleftrightarrow  [f] \leqq_R [g]$ and $[g] \leqq_R [f],$} $$
namely,
$$\text{$\exists [s_1], [s_2] \in \mathcal \mathcal [X, X]$ such that $
[f] =[g] \circ [s_1], [g]=[f] \circ [s_2]$},$$
i.e. the following diagram commutes up to homotopy:
$$\xymatrix
{
X \ar@<0.8ex> [d]^{s_1} \ar [r]^{f} & Y\\
X \ar@<0.8ex>[u]^{s_2} \ar [ur] _{g} & }.
$$

The relation $\sim _R$ is an equivalence relation, called \emph{right equivalence relation} and the set of equivalence classes shall be denoted by
$[X,Y]_{ R}  := [X,Y]/ \sim_{ R}.$ The equivalence class of $[f]$ is denoted by $[f]_R$. We define the order $\leqq'_R$ on $[X,Y]_{ R}$ by $[f]_R \leqq'_R [g]_R \Longleftrightarrow [f] \leqq_R [g]$. This order $\leqq'_R$ 
is well-defined and becomes a partial order. Thus the canonical map $\pi_R: ([X,Y], \leqq_R) \to ([X,Y]_R, \leqq'_R)$ is a monotone (order-preserving) map from a proset to a poset. If we consider the Alexandroff topologies $\tau_{\leqq_R}$ on the source $([X,Y], \leqq_R)$ and $\tau_{\leqq'_R}$ on the target $([X,Y]_R, \leqq'_R)$, this in turn gives us a continuous map $\pi_R:([X,Y], \tau_{\leqq_R}) \to ([X,Y]_R, \tau_{\leqq'_R})$.  In other words, this is \emph{a continuous map from a topological space $([X,Y], \tau_{\leqq_R})$ to a poset 
$([X,Y]_R, \leqq'_R)$ which is considered as a topological space $([X,Y]_R, \tau_{\leqq'_R})$ with the Alexandroff topology}. Such a map is called \emph{a poset-stratified space} in modern terminology (e.g., see \cite{Lurie}). 
\begin{rem} In the case when we consider the self-homotopy equivalences $\mathcal E[X]$ of $X$, instead of the monoid $[X,X]$, since each element $[s] \in \mathcal E[X]$ has its inverse $[s]^{-1} \in \mathcal E[X]$ (more precisely, $\exists s':X \to X$ such that $s \circ s' \sim id_X$ and $s' \circ s \sim id_X$, thus $[s]^{-1} = [s']$), the above equivalence relation $\sim_R$ is replaced simply by the following equivalence relation $\sim_{\mathcal ER}$:
$$[f] \sim_{\mathcal ER} [g] \Longleftrightarrow \exists [s] \in \mathcal E[X] \,\,\, \text{such that $[f] = [g] \circ [s]$},$$
i.e. the following diagram commutes up to homotopy ($f \sim g \circ s$):
$$\xymatrix
{
X \ar [d]_s \ar [r]^{f} & Y\\
X \ar [ur] _{g} & }.
$$
Because $f \sim g \circ s$ automatically implies that $g \sim f \circ s'$. On the set $[X,Y]_{\mathcal ER}$ of equivalence classes, as in the case of $[X,Y]_R$, one can define the following order for $[f]_{\mathcal ER}, [g]_{\mathcal ER} \in [X,Y]_{\mathcal ER}$
$$\text{$[f]_{\mathcal ER}  \leqq_R [g]_{\mathcal ER} \Longleftrightarrow  \exists s \in [X, X]$ (not $\exists  s \in \mathcal E[X]$) \, \, such that $[f] = [g] \circ [s]$.} \, \,  $$
Here we emphasize that this order is \emph{not necessarily} a partial order, but that the above order $\leqq'_R$ on $[X,Y]_{ R}$ defined by $[f]_R \leqq'_R [g]_R \Longleftrightarrow [f] \leqq_R [g]$ \emph{is} a partial order, because of the equivalence relation $[f] \sim_{R} [g]$ defined by $\exists [s_1], [s_2] \in \mathcal \mathcal [X, X]$ such that $
[f] =[g] \circ [s_1], [g]=[f] \circ [s_2]$. One could think of such a pair $([s_1], [s_2])$ as a ``mock" self-homotopy equivalence of $X$ with respect to the pair $(f, g)$.

\end{rem}

Similarly we consider the preorder 
$$\text{$[f] \leqq_L [g] \Longleftrightarrow \exists [t] \in [Y,Y]$ such that $[f] = [t] \circ [g]$,}$$
i.e. the following diagram commutes up to homotopy ($f \sim t \circ g$ \footnote{As remarked later, in a different context Karol Borsuk \cite{Bor1, Bor2} considered such a relation when he characterized his definition of \emph{ $f:X \to Y$ depending on $g:X \to Y$}.}):
$$\xymatrix
{
X \ar[dr]_g \ar [r]^{f} & Y\\
& Y. \ar[u]_t}
$$
Then we consider the following equivalence relation $\sim_L$ using this preorder $\leqq_L$:

$$\text{$[f] \sim_L [g]  \Longleftrightarrow  [f] \leqq_L [g]$ and $[g] \leqq_L [f],$}$$
i.e., $\exists [t_1], [t_2] \in \mathcal \mathcal [Y, Y]$ such that $
[f] =[t_1] \circ [g], [g]= [t_2] \circ [f]$, i.e. the following diagram commutes up to homotopy:
$$\xymatrix
{
X \ar[dr]_g \ar [r]^{f} & Y \ar@<0.8ex> [d]^{t_2} \\
& Y. \ar@<0.8ex>[u]^{t_1}
}
$$

The equivalence relation $\sim _L$ is called \emph{left equivalence relation} and the set of equivalence classes shall be denoted 
$[X,Y]_{ L}  := [X,Y]/ \sim_{ L}.$ As in the case of $\leqq_R$ and $\leqq'_R$, the canonical map $\pi_L: ([X,Y], \leqq_L) \to ([X,Y]_L, \leqq'_L)$ is a monotone map from a proset to a poset.

These poset-stratified spaces can be captured as functors from the homotopy category of topological spaces to the category of poset-stratified spaces as follows:
\begin{thm} Let $h\mathcal Top$ be the homotopy category of topological spaces. 
\begin{enumerate}
\item For any object $S \in Obj(h\mathcal Top)$, we have an associated covariant functor
$\frak {st}^S_*: h\mathcal Top \to \mathcal Strat$
such that
\begin{enumerate}
\item for each object $Y \in Obj(h\mathcal Top)$, 
$$\frak {st}^S_*(X) := \left ( ([S,X], \tau_{\leqq_R}), ([S,X], \tau_{\leqq_R}) \xrightarrow {\pi_R} ([S,X]_R, \leqq'_R) \right)$$
\item for a morphism $[f] \in [X,Y]$ , $\frak {st}^S_*([f])$ is the following commutative diagram:
$$\xymatrix
{
([S,X], \tau_{\leqq_R}) \ar [r]^{\pi_R} \ar [d]_{f_*} & ([S,X]_R, \leqq'_R)  \ar [d]^{f_*} \\
([S,Y], \tau_{\leqq_R})  \ar[r]_{\pi_R} & ([S,Y]_R, \leqq'_R) \\
}
$$
\end{enumerate}
\item For any object $T \in Obj(h\mathcal Top)$, we have an associated contravariant functor
$\frak {st}_T^*: h\mathcal Top \to \mathcal Strat$
such that
\begin{enumerate}
\item for each object $X \in Obj(h\mathcal Top)$, 
$$\frak {st}_T^*(X) := \left ( ([X,T], \tau_{\leqq_L}), ([X,T], \tau_{\leqq_L}) \xrightarrow {\pi_L} ([X,T]_L, \leqq'_L) \right)$$
\item for a morphism $[f] \in [X,Y]$  , $\frak {st}_T^*([f])$ is the following commutative diagram:
$$\xymatrix
{
([Y,T], \tau_{\leqq_L}) \ar [r]^{\pi_L} \ar [d]_{f^*} & ([Y,T]_L, \leqq'_L)  \ar [d]^{f^*} \\
([X,T], \tau_{\leqq_L})  \ar[r]_{\pi_L} & ([X,T]_L, \leqq'_L) \\
}
$$
\end{enumerate}
\end{enumerate}
\end{thm}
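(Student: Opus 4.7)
The plan is to unwind both parts to essentially the same one-line verification about how post-composition (resp.\ pre-composition) interacts with the monoid action of $[S,S]$ (resp.\ $[T,T]$), and then to check functoriality by inspection. The object assignments are already produced by the construction preceding the theorem: $\pi_R$ is a monotone map from the proset $([S,X],\leqq_R)$ to the poset $([S,X]_R,\leqq'_R)$ and is therefore continuous for the associated Alexandroff topologies, so it is a bona fide poset-stratified space; symmetrically for $\pi_L$ on $[X,T]$. Thus the substantive content is that a morphism $[f]:X\to Y$ induces a morphism of poset-stratified spaces between these data, and that this assignment is functorial.

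For part (1), given $[f]:X\to Y$ I define $f_*[\varphi]:=[f\circ\varphi]$. If $[\varphi_1]\leqq_R[\varphi_2]$ in $[S,X]$, witnessed by $[s]\in[S,S]$ with $\varphi_1\sim\varphi_2\circ s$, then $f\circ\varphi_1\sim(f\circ\varphi_2)\circ s$, whence $f_*[\varphi_1]\leqq_R f_*[\varphi_2]$ in $[S,Y]$. So $f_*$ is order-preserving on the prosets, which is exactly continuity of $f_*:([S,X],\tau_{\leqq_R})\to([S,Y],\tau_{\leqq_R})$ for the Alexandroff topologies. Applying the same inequality in both directions shows that $f_*$ carries $\sim_R$-classes into $\sim_R$-classes, hence descends to a well-defined monotone map $\overline{f_*}:([S,X]_R,\leqq'_R)\to([S,Y]_R,\leqq'_R)$. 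Commutativity of the displayed square is then tautological, since both composites send $[\varphi]$ to the $\sim_R$-class of $[f\circ\varphi]$. Part (2) is strictly dual: given $[f]:X\to Y$, pre-composition $f^*[\psi]:=[\psi\circ f]$ maps $[Y,T]\to[X,T]$, and if $\psi_1\sim t\circ\psi_2$ then $\psi_1\circ f\sim t\circ(\psi_2\circ f)$, so $f^*$ preserves $\leqq_L$ and descends; the reversal of the arrow on $[f]$ accounts for the contravariance.

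Functoriality follows by inspection: $(\mathrm{id}_X)_*=\mathrm{id}_{[S,X]}$ descends to the identity on $[S,X]_R$, and for composable $[f]:X\to Y$, $[g]:Y\to Z$ both sides of $(g\circ f)_*=g_*\circ f_*$ send $[\varphi]$ to the $\sim_R$-class of $[g\circ f\circ\varphi]$; the contravariant case is analogous with composition reversed. The only point I would flag, given that $[S,S]$ and $[T,T]$ are monoids rather than groups, is that one cannot invert witnesses of the preorder, so preservation of the two-sided equivalence $\sim_R$ (resp.\ $\sim_L$) must be argued by preserving each of the two inequalities separately, rather than by transporting inverses. Once that is noted, there is no genuine obstacle and the proof reduces to a bookkeeping exercise in associativity of composition and the monoid actions on $[S,X]$ and $[X,T]$.
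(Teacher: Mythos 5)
Your proof is correct and follows exactly the route the paper intends: the paper states this theorem without a written proof, relying on the preceding propositions that $\pi_R$ and $\pi_L$ are monotone maps from prosets to posets, and leaving the verification that post-/pre-composition preserves $\leqq_R$ (resp. $\leqq_L$), descends to the quotient posets, and satisfies functoriality as the routine check you have supplied. Your remark that $[S,S]$ and $[T,T]$ are only monoids, so preservation of $\sim_R$ and $\sim_L$ must be checked one inequality at a time, is exactly the right point of care and matches the paper's own emphasis on the difference between $[X,X]$ and $\mathcal E[X]$.
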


In Example \ref{ex1} we see an example such that the homotopy sets are the same:$[S,X]=[S,Y]$, but their poset-stratified space structures are different: $\frak {st}^S_*(X) \not = \frak {st}^S_*(Y).$

By considering homology and cohomolgy, and homotopy and cohomotopy, we can get other more ``algebraic" or ``geometric" poset-stratified space structures on the homotopy set.
For example, consider the homotopy set $[S^1, S^1] = \mathbb Z$. Then the preorder $a \leqq_R b$ is by our definition nothing but $\exists s \in \mathbb Z$ such that $a = b\cdot s$, i.e., $b$ divides $a$, $b|a$. For an integer $n \in \mathbb Z= [S^1, S^1]$, i.e., $n$ is the homotopy class of the map $z^n: S^1 \to S^1$ and consider $(z^n)_*:H_1(S^1) \to H_1(S^1)$ or $(z^n)_*:\pi_1(S^1) \to \pi_1(S^1)$, which gives us the homomorphism $\times n: \mathbb Z \to \mathbb Z$. Then the image $\op{Im}(\times n) = (n) =\{ kn \, |\, k \in \mathbb Z\}$ is the subgroup %of 
generated by the integer $n$. The set $\mathcal Sub(\mathbb Z)$ of all the subgroups of $\mathbb Z$ is $\{ (n) \, | \, n \in \mathbb Z \}$ and the order $(a) \leqq (b)$ defined by the inclusion $(a) \subset (b)$, which means that $\exists s \in \mathbb Z$ such that $a = b\cdot s$, thus $b|a$.
Thus the map $\op{Im}_{H_1}:([S^1, S^1], \leqq_R) =(\mathbb Z, \leqq_R) \to (\mathcal Sub(\mathbb Z), \leqq)$ defined by $\op{Im}_{H_1}(n) = \op{Im}((z^n)_*) =(n)$
is a monotone map.

In the case of $([X,Y], \leqq_L)$ we consider the cohomology theory $H^*(-; \mathbb Z)$ and we get a canonical monotone map 
$\op{Im}_{H^*}:([X,Y], \leqq_L) \to (\mathcal Sub(H^*(X)), \leqq)$, which is 
 defined by $\op{Im}_{H^*}([f]):= \op{Im}(f^*:H^*(Y) \to H^*(X)) =f^*(H^*(Y))$. Here $\mathcal Sub(H^*(X))$ is the set of all the subgroups of $H^*(X)$ and the order $S_1 \leqq S_2$ for subgroups $S_1, S_2 \in \mathcal Sub(H^*(X))$ is the usual inclusion $S_1 \subset S_2$. This \emph{monotone} map $\op{Im}_{H^*}:([X,Y], \leqq_L) \to (\mathcal Sub(H^*(X)), \leqq)$ has a connection with R. Thom's notion of \emph{dependence of cohomology classes} \cite{Thom}. Indeed, %we 
let us consider $Y = K(\mathbb Z, p)$ the Eilenberg-Maclane space, %and 
then we have $\op{Im}_{H^*}:([X,K(\mathbb Z, p)], \leqq_L) \to (\mathcal Sub(H^*(X)), \leqq)$. Since $H^p(X, \mathbb Z) =[X, K(\mathbb Z, p)]$, let $f_{\alpha}:X \to K(\mathbb Z, p)$ be a map whose homotopy class $[f_{\alpha}]$ corresponds to the cohomology class $\alpha \in H^p(X, \mathbb Z)$.  Let $\beta \in H^p(X, \mathbb Z)$ be another cohomology class, thus we consider the corresponding homotopy class $[f_{\beta}]$. Let $[f_{\beta}] \leqq_L [f_{\alpha}]$, i.e. $\exists [t] \in [K(\mathbb Z, p),K(\mathbb Z, p)]$ such that $[f_{\beta}] = [t] \circ [f_{\alpha}]$ ($f_{\beta} \sim t \circ f_{\alpha}$), which implies that $\op{Im}(f_{\beta}^*) =f_{\beta}^*(H^*(K(\mathbb Z, p))) \subset \op{Im}(f_{\alpha}^*) =f_{\alpha}^*(H^*(K(\mathbb Z, p)))$. In particular, $\beta \in f_{\beta}^*(H^*(K(\mathbb Z, p)))$, thus $\beta \in f_{\alpha}^*(H^*(K(\mathbb Z, p)))$, which implies by Thom's definition of \emph{dependence of cohomology classes} \cite{Thom} (also see \cite{Hil1}) that \emph{the cohomology class $\beta$ depends on the cohomology class $\alpha$}. Thus the upshot is that our $[f_{\beta}] \leqq_L [f_{\alpha}]$, namely, that \emph{$f_{\beta}$ depends on $f_{\alpha}$} (using Borsuk's definition of \emph{dependence of maps}) implies that \emph{$\beta$ depends on $\alpha$}. 
 
If we consider $Y= G_n(\mathbb C^{\infty})$ the infinite Grassmann of $n$-dimensional planes in $\mathbb C^{\infty}$ for $\op{Im}_{H^*}:([X,Y], \leqq_L) \to (\mathcal Sub(H^*(X)), \leqq)$, then we get a natural ``order" among the isomorphism classes of complex vector bundles. Indeed, if we denote the set of isomorphism classes of complex vector bundles of rank $n$, then we know that 
$\op{Vect}_n(X) \cong [X, G_n(\mathbb C^{\infty})]$, which is by the correspondence $[E] \leftrightarrow [f_E]$, where $f_E:X \to G_n(\mathbb C^{\infty})$ is a classifying map of $E$, i.e., $E = f_E^* \gamma^n,$ 
where $\gamma^n$ is the universal complex vector bundle of rank $n$ over $G_n(\mathbb C^{\infty})$. 
By the isomorphism $\op{Vect}_n(X) \cong [X, G_n(\mathbb C^{\infty})]$ we can consider the peorder on $\op{Vect}_n(X)$: $[E] \leqq_L [F] \Longleftrightarrow [f_E] \leqq_L [f_F],$
where $f_E, f_F:X \to G_n(\mathbb C^{\infty})$ are respectively the classifying maps of $E$ and $F$.
Then we have the following well-defined monotone (order-preserving) map:
$$ \op{Im}_{H^*}: (\op{Vect}_n(X), \leqq_L) \to (\mathcal Sub(H^*(X; \mathbb Z)), \leqq )$$
defined by 
$ \op{Im}_{H^*}([E]):= \op{Im} \left (f_E^*:H^*(G_n(\mathbb C^{\infty});\mathbb Z) \to H^*(X; \mathbb Z) \right ).$
By the definition of characteristic classes, \emph{ $\op{Im}\left (f_E^*:H^*(G_n(\mathbb C^{\infty});\mathbb Z) \to H^*(X; \mathbb Z) \right )$ is the subring consisting of \underline{all} the characteristic classes of $E$}, denoted by $\op{Char}(E)$. Therefore we have
$[E] \leqq_L[F] \Longrightarrow \op{Char}(E) \subseteq \op{Char}(F).$
We also get that  $[E] \sim_L[F] \Longrightarrow \op{Char}(E) = \op{Char}(F).$

We also show that the Gottlieb groups and Lusternik-Schnirelmann category of a map give %the homotopy se
poset-stratified space structures to homotopy sets.

%%%%%%%%%%%%%%%%%%
\section{Preliminaries }

In this section we give some preliminaries for later use.

A preorder on a set $P$ is a relation $\leqq$ which is reflexive ($a \leqq a$) and transitive ($a \leqq b, b \leqq c \Longrightarrow a \leqq c$).
A set $(P, \leqq)$ equipped with a preorder $\leqq$ is called a \emph{proset} (preordered set).
If a preorder $\leqq $ is anti-symmetric ($a \leqq b, b \leqq a \Longrightarrow a =b$), then it is called a partial order and a set with a partial order is called a \emph{poset} (partially ordered set).

\begin{defn}[Alexandroff topology \cite{A1}]  Let $X$ be a topological space. If \emph{the intersection of any family} of open sets is open or equivalently the union of any family of closed sets is closed, then the topology is called an \emph{Alexandorff topology} and the space is called an \emph{Alexandroff space} %(cf. \cite{Ar, Sp})
.
\end{defn}

For Alexandroff topology or spaces, e.g., see \cite{A1}, \cite{A2}, \cite{Ar}, \cite[\S 4.2.1 Alexandroff Topology]{Cur}, \cite{Sp}, \cite[Appendix A  Pre-orders and spaces]{Woolf}.

Note that any finite topological space, i.e. a finite set with a topology, is clearly an Alexandroff space. (For finite topological spaces, e.g., see \cite{B, May1, May2, May3, Mc1, Mc2, St}.)

Given a proset $(X, \leqq)$, we define $U \subset X$ to be an open set if and only if 
$x \in U, \, x \leqq y \Rightarrow y \in U$, i.e. if and only if $U$ is \emph{closed upwards}\footnote{The Alexandroff topology is sometimes considered by defining an open set to be \emph{closed downwards} instead of closed upwards, e.g., see \cite{Ar}, \cite{B}, \cite{May1} and \cite{Sp}. When stratification theory or poset-stratified spaces are considered as in the above cited references \cite{Cur} and \cite{Woolf}, upward closedness is used in defining Alexandroff topology (e.g., see \cite[Definition A.5.1]{Lurie} and \cite[Definition 2.1 ]{Tam} as well).}.
In other words, if we let $U_x := \{y \in X \, | \, x \leqq y \}$, then $\{U_x \, | \, x \in X\}$ is the base for the topology. This topology is denoted by $\tau_{\leqq}$. 

\begin{lem} For a proset $(X, \leqq)$, the topological space $(X, \tau_{\leqq})$ is an Alexandroff space.
\end{lem}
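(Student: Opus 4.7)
The plan is to verify directly from the definition of $\tau_{\leqq}$ that arbitrary intersections of open sets remain open, which is the defining property of an Alexandroff space.

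First I would recall the definition: a set $U \subset X$ is declared open in $\tau_{\leqq}$ precisely when $U$ is upward-closed, i.e. $x \in U$ and $x \leqq y$ imply $y \in U$. The paragraph preceding the lemma also records that $\{U_x : x \in X\}$, where $U_x = \{y \in X : x \leqq y\}$, forms a base; one should quickly observe that this indeed yields a topology (the empty set and $X$ are trivially upward-closed, and an arbitrary union of upward-closed sets is upward-closed, so finite intersections in particular are open).

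The main step is then the arbitrary-intersection condition. Let $\{U_\alpha\}_{\alpha \in A}$ be an arbitrary family of open sets in $(X, \tau_{\leqq})$, so each $U_\alpha$ is upward-closed. Suppose $x \in \bigcap_{\alpha \in A} U_\alpha$ and $x \leqq y$. Then for every $\alpha \in A$ we have $x \in U_\alpha$, and by upward-closedness of $U_\alpha$ it follows that $y \in U_\alpha$. Hence $y \in \bigcap_{\alpha \in A} U_\alpha$, showing this intersection is upward-closed, and therefore open in $\tau_{\leqq}$. By the definition of an Alexandroff space this proves the lemma.

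Honestly, there is no real obstacle here: the statement is essentially a tautology once one has chosen the convention (upward-closed opens, as emphasized in the footnote of the excerpt). The only point of care is to avoid silently confusing the two dual conventions for the Alexandroff topology; sticking uniformly to the upward-closed convention adopted by the authors makes the verification immediate.
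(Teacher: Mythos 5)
Your proof is correct and is the standard direct verification (arbitrary intersections of upward-closed sets are upward-closed, hence open); the paper in fact omits the proof entirely, treating it as immediate, so there is nothing to compare beyond noting that your argument is the expected one.
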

Because of this, the topology $\tau_{\leqq}$ is called the Alexandroff topology (associated to the preorder).%We remark that a 
\begin{ob} A subset $F$ is a closed set in the topology $\tau_{\leqq}$ if and only if
$x \in F, \, y \leqq x \Rightarrow y \in F.$ 
\end{ob}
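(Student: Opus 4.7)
The plan is to unpack the definition of closed set as the complement of an open set and apply the characterization of open sets in $\tau_{\leqq}$ given just above the observation, then take the contrapositive to flip the direction of the inequality.

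First, I would recall that by definition $F \subset X$ is closed in $\tau_{\leqq}$ if and only if its complement $U := X \setminus F$ is open, and from the discussion preceding the observation, $U$ is open in $\tau_{\leqq}$ precisely when it is upward-closed, i.e.\ $u \in U$ and $u \leqq v$ imply $v \in U$.

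Next, I would translate this upward-closedness of $U = X \setminus F$ into a statement about $F$ by contraposition. The condition ``$u \in X \setminus F$ and $u \leqq v \Rightarrow v \in X \setminus F$'' is logically equivalent to ``$v \in F$ and $u \leqq v \Rightarrow u \in F$.'' Renaming the dummy variables ($v \mapsto x$, $u \mapsto y$), this reads exactly: $x \in F$ and $y \leqq x \Rightarrow y \in F$, which is the downward-closedness asserted in the observation. Conversely, if $F$ is downward-closed in this sense, the same contrapositive shows $X \setminus F$ is upward-closed, hence open, so $F$ is closed.

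There is essentially no obstacle here; the only thing one must be slightly careful about is the convention, since (as the footnote earlier in the paper warns) some authors define the Alexandroff topology using downward-closed open sets instead. Under the convention fixed in this paper (open = upward-closed, following \cite{Lurie} and \cite{Tam}), closed sets are downward-closed, which is precisely the content of the observation.
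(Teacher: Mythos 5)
Your proof is correct and is exactly the standard complementation argument the paper intends (the paper states this as an Observation without proof, the claim following immediately from the fact that open sets in $\tau_{\leqq}$ are the upward-closed sets). The contrapositive step is carried out correctly, and your remark about the upward- versus downward-closed convention matches the paper's own footnote.
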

From this observation we can see that if $P$ is a poset, \emph{not a proset}, for any point $x \in P$,
$\{x\} = \{y \in P \, | \, x \leqq y\} \cap \{y \in P \, | \, y \leqq x \}.$ In other words, in the associated Alexandroff topology $\tau_{\leqq}$ any singleton $\{x\}$ is a locally closed set, i.e., the intersection of a closed set and an open set. Note that for example, for a two-point proset $(\{a, b\}, \leqq)$ with the preorder $\leqq$ defined by $a\leqq b, b \leqq a$, the above observation does not hold.

If we let $\mathcal Proset$ be the category of prosets and monotone (order-preserving) functions of prosets and $\mathcal Alex$ be the category of Alexandroff spaces and continuous maps, then we have a covariant functor $\mathcal T: \mathcal Proset \to \mathcal Alex.$

Conversely, for a topological space $(X, \tau)$, we define the following order, called \emph{specialization order}, on $X$:
$\text {$x \leqq_{\tau} y \Leftrightarrow x \in \overline {\{y\}}$}.$
Certainly this is a preorder, but not necessarily a partial order. (For example, for any indiscrete topological space having more than or equal to two points, it is never a partial porder.) If $f:(X, \tau_1) \to (Y, \tau_2)$ is a continuous map, then $f:(X, \leqq_{\tau_1}) \to (Y, \leqq_{\tau_2}) $ is a monotone function.
Therefore we have a covariant functor $\mathcal P: \mathcal Top \to \mathcal Proset.$
We have that for any proset $(X, \leqq)$, 
$\left( \mathcal P \circ \mathcal T \right) ((X, \leqq )) = (X, \leqq )$, i.e., $\mathcal P \circ \mathcal T = Id_{\mathcal Proset}.$
However, in general, for a topological space $(X, \tau)$ we have
$\left( \mathcal T \circ \mathcal P \right) ((X, \tau)) \not = (X, \tau)$, i.e., $\mathcal T \circ \mathcal P \not = Id_{\mathcal Top}$.
The reason is simple: $\left( \mathcal T \circ \mathcal P \right) ((X, \tau))$ is always an Alexandroff space, even if the original space $(X, \tau)$ is not an Alexandroff space, namely the topology of $\left( \mathcal T \circ \mathcal P \right) ((X, \tau))$ is stronger that the original topology $\tau$. 
However, if we restrict the covariant functor $\mathcal P: \mathcal Top \to \mathcal Proset$ to the subcategory $\mathcal Alex$ of Alexandroff spaces, then we have 
$\left( \mathcal T \circ \mathcal P \right) ((X, \tau)) = (X, \tau)$, i.e., $\mathcal T \circ \mathcal P = Id_{\mathcal Alex}$.
Therefore we have that 
$\mathcal P \circ \mathcal T = Id_{\mathcal Proset}, \mathcal T \circ \mathcal P = Id_{\mathcal Alex}.$
Thus Alexandroff spaces and prosets are equivalent.

For a proset $(P, \leqq)$, we can consider the reversed order, denoted $\leqq ^{op}$, by
$a \leqq^{op} b \Leftrightarrow b \leqq a.$
Here we note that the Alexandroff topologies associated to the two prosets $(P, \leqq)$ and $(P, \leqq^{op})$ of the same set $P$ are different. 
%In the above the Alexandroff topology is defined using upward-set (or ``upset"). However one can define another Alexandroff topology using down-set. For example, in \cite{B} J. A. Barmark considers the Alexandroff topology using down-set, namely the basic open set is $U_x =\{ y \, | \, y \leqq x\}$. 

%%%%%%%%%%%%%%%%%%%%
A stratification of a topological space (which can be the underlying topological
space of a much finer object such as a complex algebraic variety, a complex analytic space) is a special kind of decomposition with certain extra conditions. 
It seems that there is no fixed or standard definition of \emph{stratification} and there are several ones depending on the objects to study, such as topologicaly stratified spaces and Thom--Whitney stratified spaces. In \cite{Tam} D. Tamaki gives a nice review of several stratifications available in mathematics.

Here is one definition of stratification:
\begin{defn}\label{stratification}
Let $X$ be a topological space. If a family $\{e_{\la}\}_{\la \in \Lambda}$ of subsets of $X$ satisfies the following conditions, then $\{e_{\la}\}_{\la \in \Lambda}$  is called \emph{a stratification} of $X$.
\begin{enumerate}
\item $e_{\la} \cap e_{\mu} = \emptyset$ if $\la \not = \mu.$
\item $X = \bigcup_{\la} e_{\la}$.
\item (locally closed set) Each $e_{\la}$ is a locally closed set.
\item (frontier condition) $e_{\la} \cap \overline{e_{\mu}} \not = \emptyset  \Longrightarrow e_{\la} \subset \overline{e_{\mu}}.$
\end{enumerate}
\end{defn}

%%%%%%%%%%%%%%%%%%%%%%%%%
Just a decomposition requires only (1) and (2). Given a decomposition $\mathcal D$ of $X$, we have the quotient map
$\pi_{\mathcal D}: X \to X/\mathcal D,$
which means that one considers each piece $e_{\lambda}$ as a point. Then we can identify $X/\mathcal D = \Lambda$. We consider the quotient topology, denoted $\tau_{\pi_{\mathcal D}}$, on the target $\Lambda$, i.e., the finest or strongest topology on $\Lambda$ such that the quotient map $\pi_{\mathcal D}: X \to X/\mathcal D = \Lambda$ becomes a continuous map. Suppose that the quotient topology $\tau_{\pi_{\mathcal D}}$ is an Alexandroff topology, which is the case when the decomposition $\mathcal D$ is finite, i.e., $\Lambda$ is a finite set. Then we get the preorder $\leqq_{\tau_{\pi_{\mathcal D}}}$.
If $\leqq_{\tau_{\pi_{\mathcal D}}}$ is a partial order, then each piece $e_{\lambda} = \pi_{\mathcal D}^{-1}(\lambda)$ has to be locally closed, because each singleton 
$\{\lambda\}$ is a locally closed set as observed above. At the moment we do not know if the converse holds, i.e., whether each piece $e_{\lambda}$ being locally closed  implies that $\leqq_{\tau_{\pi_{\mathcal D}}}$ is a partial order.

As to the preorder on $\Lambda$, we can define it using the above ``frontier condition" %as follows:
by
$\la \leqq^* \mu \Longleftrightarrow e_{\la} \subset \overline {e_{\mu}}.$
Then one can see that each piece $e_{\lambda}$ being locally closed implies that the above preorder $\leqq^*$ is in fact a partial order. Furthermore the quotient map $\pi_{\mathcal D}:X \to X/\mathcal D=\Lambda$ is a continuos map with the Alexandroff topology $\tau_{\leqq^*}$ associated to the order $\leqq^*$ if and only if the Alexandroff topology $\tau_{\leqq^*}$ is equal to the quotient topology. In other words, if the decomposition space $X/\mathcal D =\Lambda$ with the quotient topology is an Alexandroff space, then the order $\leqq^*$ is the same as $\leqq_{\tau_{\pi_{\mathcal D}}}$, i.e., $\la \leqq_{\tau_{\pi_{\mathcal D}}} \mu \Longleftrightarrow e_{\la} \subset \overline {e_{\mu}}$.

Such \emph{a continuous map from a topological space to a poset considered as a topological space with the Alexandroff topology} has been studied in recent papers (e.g., \cite{AFT, Cur, Lurie, Tam, Yo}, etc.)

%%%%%%%%%%%%%%%%%%
\begin{defn} Let $P$ be a poset. A \emph{poset-stratified space $S$} over the poset $P$ is a pair $(S, S \xrightarrow \pi P)$ of a topological space $S$ and a continuous map $\pi: S \to P$ where $P$ is considered as the associated Alexandroff space. 
\end{defn}

%%%%%%%%%%%%%%%%%%
\begin{rem} The notion of poset-stratified space is due to Jacob Lurie \cite{Lurie}. For a poset-stratified space $(S, S \xrightarrow \pi P)$, $S$ is the underlying topological space and $\pi: S \to P$ is considered  as \emph{a structure of poset-stratification}. If the context is clear, then we just write a poset-stratified space $S$, just like writing a topological space $S$ without referring to which topology to be considered on it. 
\end{rem}

The category of poset-stratified spaces is denoted by $\mathcal Strat$. The objects are pairs $(S, S \xrightarrow \pi P)$ of a topological space $S$ and a continuous map $\pi: S \to P$ from the space $S$ to a poset $P$ with the Alexandroff topology associated to the poset $P$. Given two poset-stratified spaces $(S, S \xrightarrow {\pi} P)$ and $(S', S' \xrightarrow {\pi'} P)$, a morphism from $(S, S \xrightarrow \pi P)$ to $(S', S' \xrightarrow {\pi'} P')$ is a pair of a continuous map $f:S \to S'$  and a monotone map $q:P \to P'$ (i.e., for $a\leqq b$ in $P$ we have $q(a) \leqq q(b)$ in $P'$, thus it is a continuous map for the associated Alexandroff sapces) such that the following diagram commutes:
$$
\xymatrix{
S \ar [r]^{\pi} \ar [d]_{f} & P \ar [d]^{q} \\
S' \ar[r]_{\pi'} & P'. 
}
$$

%%%%%%%%%%%%%%%%%%%%%%%
\section{A poset-stratified space structure of $[X,Y]$}

\begin{lem} On the homotopy set $[X,Y]$ we define the following orders, which are preorders.
\begin{enumerate}
\item
$[f] \leqq_R [g] \Longleftrightarrow \exists [s] \in [X, X]$ such that $[f] = [g] \circ [s]$,
i.e. the following diagram commutes up to homotopy:
$$\xymatrix
{
X \ar [d]_s \ar [r]^{f} & Y\\
X \ar [ur] _{g} & }.
$$
\item
$[f] \leqq_L [g] \Longleftrightarrow \exists [t] \in [Y,Y]$ such that $[f] = [t]\circ [g]$,
i.e. the following diagram commutes up to homotopy:
$$\xymatrix
{
X \ar[r]^f \ar [dr]_g & Y\\
& Y. \ar[u]_t }
$$
\item
$[f] \leqq_{LR} [g] \Longleftrightarrow \exists [s] \in [X,X] , \, \exists [t] \in [Y,Y]$ such that $[f] = [t] \circ [g] \circ [s]$, 
i.e. the following diagram commutes up to homotopy:
$$\xymatrix
{
X \ar[d]^{s} \ar [r]^f & Y\\
X \ar [r]_g & Y \ar[u]^{t}
}
$$
\end{enumerate}
\end{lem}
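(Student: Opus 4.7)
The plan is to verify directly that each of the three relations satisfies the two axioms of a preorder, namely reflexivity and transitivity. The key input in all three cases is that composition of continuous maps descends to a well-defined, associative composition on homotopy classes, and that $[\mathrm{id}_X] \in [X,X]$ and $[\mathrm{id}_Y] \in [Y,Y]$ are two-sided identities for this composition. In other words, the monoid structures on $[X,X]$ and $[Y,Y]$ (with unit and associativity) are exactly what we need.

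First I would handle $\leqq_R$. For reflexivity, given $[f] \in [X,Y]$, take $[s] = [\mathrm{id}_X] \in [X,X]$; then $[f] \circ [\mathrm{id}_X] = [f]$, so $[f] \leqq_R [f]$. For transitivity, suppose $[f] \leqq_R [g]$ and $[g] \leqq_R [h]$, witnessed by $[s_1],[s_2] \in [X,X]$ with $[f] = [g] \circ [s_1]$ and $[g] = [h] \circ [s_2]$. Substituting and using associativity of composition in $[X,X] \to [X,Y]$ gives $[f] = [h] \circ [s_2] \circ [s_1] = [h] \circ ([s_2] \circ [s_1])$, and since $[s_2] \circ [s_1] \in [X,X]$, this witnesses $[f] \leqq_R [h]$.

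The argument for $\leqq_L$ is entirely parallel, but carried out on the target side: reflexivity uses $[t] = [\mathrm{id}_Y]$, and for transitivity, given $[f] = [t_1] \circ [g]$ and $[g] = [t_2] \circ [h]$ with $[t_1],[t_2] \in [Y,Y]$, we get $[f] = ([t_1] \circ [t_2]) \circ [h]$, with $[t_1] \circ [t_2] \in [Y,Y]$. For $\leqq_{LR}$, reflexivity uses the pair $([\mathrm{id}_X],[\mathrm{id}_Y])$, and transitivity combines both prior arguments: if $[f] = [t_1] \circ [g] \circ [s_1]$ and $[g] = [t_2] \circ [h] \circ [s_2]$, then $[f] = ([t_1] \circ [t_2]) \circ [h] \circ ([s_2] \circ [s_1])$, giving the required factorisation through elements of $[Y,Y]$ and $[X,X]$.

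There is really no serious obstacle here; the only subtle point worth remarking is \emph{why} the relation depends only on the homotopy classes of $f$ and $g$, not on chosen representatives. This follows because composition of maps is compatible with homotopy (a homotopy $H$ between $s$ and $s'$ yields a homotopy $g \circ H$ between $g \circ s$ and $g \circ s'$, and similarly on the left), so the witnessing equations $[f] = [g] \circ [s]$ etc.\ are statements about homotopy classes and are independent of representatives. Beyond this well-definedness check, the proof is a direct verification using the unit and associativity of the composition monoids $[X,X]$ and $[Y,Y]$.
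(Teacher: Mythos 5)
Your proof is correct and is exactly the argument the paper leaves implicit: reflexivity via the identity classes $[\mathrm{id}_X]$, $[\mathrm{id}_Y]$ and transitivity via composing the witnessing classes, using the monoid structure (unit and associativity) of $[X,X]$ and $[Y,Y]$ acting on $[X,Y]$. The remark on well-definedness with respect to representatives is a sensible addition but does not change the route.
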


%%%%%%%%%%%%
\begin{lem}\label{modified version} On the homotopy set $[X,Y]$ we define the following relations.
\begin{enumerate}
\item {\bf \emph{right equivalence relation}}: $[f] \sim_{R} [g]  \Longleftrightarrow  [f] \leqq_R [g]$ and $[g] \leqq_R [f],$
i.e., $\exists \, \, [s_1], [s_2] \in \mathcal \mathcal [X, X] \,\,\,  \text{such that} \, \, 
[f] =[g] \circ [s_1], \, \,  [g]=[f] \circ [s_2]$, i.e. the following diagram commutes up to homotopy:
$$\xymatrix
{
X \ar@<0.8ex> [d]^{s_1} \ar [r]^{f} & Y\\
X \ar@<0.8ex>[u]^{s_2} \ar [ur] _{g} & }.
$$
The relation $\sim _R$ is an equivalence relation and the set of equivalence classes shall be denoted by
$$[X,Y]_{ R}  := [X,Y]/ \sim_{ R} .$$
The equivalence class of $[f]$ is denoted by $[f]_R$.
\item {\bf \emph{left equivalence relation}}:
$[f] \sim_{ L}  [g] \Longleftrightarrow  [f] \leqq_L [g]$ and $[g] \leqq_L [f],$
i.e., $\exists \, \, [t_1], \, [t_2] \in [Y, Y] \,\,\,  \text{such that} \, \, [f] = [t_1] \circ [g], \, \, [g] =[t_2] \circ [f] , $
i.e. the following diagram commutes up to homotopy:
$$\xymatrix
{
X  \ar[r]^{f} \ar[dr]_{g} &Y \ar@<0.8ex>[d]^{t_2}  \\
& Y \ar@<0.8ex> [u]^{t_1}& 
}
$$
The relation $\sim_{ L}$ is an equivalence relation and the set of equivalence classes shall be denoted by
$$[X,Y]_{ L}  := [X,Y]/ \sim_{ L} .$$
The equivalence class of $[f]$ is denoted by $[f]_L$.
\item {\bf \emph{left-right equivalence relation}}
$[f] \sim_{ LR} [g] \Longleftrightarrow  [f] \leqq_{LR} [g]$ and $[g] \leqq_{LR} [f],$
i.e., $\exists \, \, [s_1], [s_2]  \in [X,X], \,\, \exists \,\, [t_1], [t_2] \in [Y, Y] \,\,\,  \text{such that}$
$$
[f] = [t_1] \circ [g] \circ [s_1] , \, \, \, [g] = [t_2] \circ [f] \circ [s_2] $$
i.e. the following diagram commutes up to homotopy:
$$\xymatrix
{
X \ar@<0.8ex>[d]^{s_1} \ar [r]^f & Y\ar@<0.8ex>[d]^{t_2}\\
X \ar@<0.8ex> [u]^{s_2}  \ar [r]_g & Y \ar@<0.8ex>[u]^{t_1}
}
$$
The relation $\sim_{ LR}$ is an equivalence relation and the set of equivalence classes shall be denoted by
$$[X,Y]_{ LR}  := [X,Y]/ \sim_{ LR} .$$
The equivalence class of $[f]$ is denoted by $[f]_{LR}$.
\end{enumerate}
\end{lem}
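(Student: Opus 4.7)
The plan is to treat all three cases uniformly via the general fact that symmetrization of a preorder is always an equivalence relation. First I would record the (immediate) verification that each of $\leqq_R$, $\leqq_L$, $\leqq_{LR}$ is actually a preorder, as claimed in the preceding lemma. For $\leqq_R$ reflexivity follows by taking $[s] = [\op{id}_X]$, since then $[f] = [f] \circ [\op{id}_X]$; transitivity follows by composing witnesses: if $[f] = [g] \circ [s_1]$ and $[g] = [h] \circ [s_2]$, then $[f] = [h] \circ ([s_2] \circ [s_1])$ and $[s_2 \circ s_1] \in [X,X]$. The argument for $\leqq_L$ is symmetric, with witnesses in $[Y,Y]$ composed on the left, and for $\leqq_{LR}$ one simply composes on both sides simultaneously.

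Second, I would isolate the following abstract principle: for any preorder $\leqq$ on a set $P$, the relation $a \sim b \Longleftrightarrow (a \leqq b \text{ and } b \leqq a)$ is an equivalence relation. Reflexivity is immediate from $a \leqq a$; symmetry is built into the definition; transitivity follows because $a \sim b$ and $b \sim c$ give $a \leqq b \leqq c$ and $c \leqq b \leqq a$, so transitivity of $\leqq$ yields $a \leqq c$ and $c \leqq a$, hence $a \sim c$.

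Applying this principle to the three preorders $\leqq_R$, $\leqq_L$, $\leqq_{LR}$ gives at once that $\sim_R$, $\sim_L$, $\sim_{LR}$ are equivalence relations, establishing the lemma. Concretely, in each case the composite witnesses needed for transitivity come from composing homotopy classes in $[X,X]$, in $[Y,Y]$, or in both, exactly as in the preorder case: e.g. for $\sim_R$, from $[f] = [g] \circ [s_1]$, $[g] = [f] \circ [s_2]$ and $[g] = [h] \circ [s_1']$, $[h] = [g] \circ [s_2']$ one obtains $[f] = [h] \circ ([s_1' \circ s_1])$ and $[h] = [f] \circ ([s_2 \circ s_2'])$.

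There is no real obstacle here: since the relations are defined directly as the symmetric parts of already-established preorders, the lemma is essentially a formal consequence of the preceding lemma together with the general symmetrization principle. The only thing to be slightly careful about is being explicit that the required composite witnesses stay inside $[X,X]$ and $[Y,Y]$ (not inside $\mathcal{E}[X]$ or $\mathcal{E}[Y]$), which is automatic because $[X,X]$ and $[Y,Y]$ are closed under composition as monoids; this is precisely the point emphasized in the introductory remark comparing $[X,X]$ with $\mathcal{E}[X]$.
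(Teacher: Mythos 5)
Your proposal is correct and is exactly the argument the paper implicitly relies on (the lemma is stated without proof there): each of $\leqq_R$, $\leqq_L$, $\leqq_{LR}$ is a preorder because $[X,X]$ and $[Y,Y]$ are monoids under composition containing the identity class, and the symmetric part of any preorder is automatically an equivalence relation. Your explicit composite witnesses, e.g.\ $[f]=[h]\circ([s_1']\circ[s_1])$, are the right ones and correctly stay inside $[X,X]$ rather than $\mathcal E[X]$.
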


%%%%%%%%%%%%%%Jim Stasheff 's comments%%%%%%%%
\begin{rem} As to the above relation $[f]\leqq_L [g]$, Jim Stasheff (private communication) informed us of Karol Borsuk's papers \cite{Bor1, Bor2} and Peter Hilton's paper \cite{Hil1} (cf. \cite{Hil2, Hil3}).
K. Borsuk introduced \emph {dependence of maps}: $f: X \to Y$ is said to \emph{depend on $g:X \to Y$} if whenever $g$ is extended to $X' \supset X$, so is $f$. He gave an alternative naming for this notion: \emph{$f$ is a \emph{multiple} of $g$} or \emph{$g$ is a divisor of $f$}. It turned out that this naming was correct, because Borsuk proved that \emph{$f$ depends on $g$ if and only if there exists a map $t: Y \to Y$ such that $f \sim t \circ g$}, i.e., $[f] \leqq_L[g]$ in our notation.  Furthermore Borsuk defined two maps $f$ and $g$ to be \emph{conjugate} if they depend on each other, i.e., $[f] \sim_L [g]$ in our notatin. 
Dually, $f:X \to Y$ is said to \emph{co-depend on} $g:X \to Y$ if whenever $g$ lifts to the total space $E$ of a fibration over $Y$, so does $g$. Then the dual of the above Borsuk's result is that \emph{ $f$ co-depends on $g$ if and only if there exists a map $s: X \to X$ such that $f \sim g \circ s$}, i.e., $[f] \leqq_R [g]$ in our notion.  Thus, using Borsuk's notion, \emph{$[X,Y]_R$} and \emph{$[X, Y]_L$} are the poset of the homotopy classes of \emph{co-conjugate} maps and \emph{conjugate} maps, resp. In this sense, \emph{$[X, Y]_{LR}$} is the poset of homotopy classes of \emph{conjugate-co-conjugate} maps, abusing words.
According to \cite{Hil2, Hil3}, R. Thom \cite{Thom} independently introduced the notion of \emph{dependence of cohomology classes}, but it turned out that Thom's dependence is subsumed in Borsuk's dependence, and the above results about the co-dependence marked the birth of \emph{Eckmann--Hilton duality}. 

\end{rem}

%%%%%%%%%%%%%
We can define orders on $[X,Y]_R, [X,Y]_L, [X,Y]_{LR}$. For the sake of completeness we write them down below.

\begin{pro}
The following orders are well-defined and they are partial orders, i.e., reflexive, antisymmetric and transitive.

\begin{enumerate}

\item For $[f]_R, [g]_R \in [X,Y]_{ R}$, $[f]_R \leqq'_R [g]_R \Leftrightarrow \exists \, \, [\phi] \in [X,X]\,\text{such that} \, [f] = [g] \circ [\phi]$, 
i.e. the following diagram commutes up to homotopy (namely, $f \sim g \circ \phi$):
$$\xymatrix
{
X \ar [d]_{\phi} \ar [r]^{f} & Y\\
X \ar [ur] _{g} & }
$$
\item For $[f]_L, [g]_L \in [X,Y]_{ L}$, $[f]_L \leqq'_L [g]_L  \Leftrightarrow \exists \, [\psi]\in [Y, Y]\, \text{such that} \, [f] = [\psi] \circ [g]$, 
i.e. the following diagram commutes up to homotopy (namely, $f \sim \psi \circ g$):
$$\xymatrix
{
X \ar[r]^{f} \ar[dr]_{g} & Y \\
 & Y \ar[u]_{\psi}
}
$$
\item For $[f]_{LR}, [g]_{LR} \in [X,Y]_{ LR}$, $[f]_{LR} \leqq'_{LR}  [g]_{LR} \Leftrightarrow \exists \, [\phi]\in [X, X], \exists [\psi] \in [Y, Y] \, \text{such that} \, [f] = [\psi] \circ [g] \circ [\phi]$, 
i.e. the following diagram commutes up to homotopy (namely, $f \sim \psi \circ g \circ \phi$):
$$\xymatrix
{
X \ar[d]_{\phi} \ar[r]^f & Y\\
X \ar [r]_g & Y \ar[u]_{\psi}.\\
}
$$
\end{enumerate}
\end{pro}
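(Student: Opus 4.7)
The plan is to treat the three partial orders $\leqq'_R$, $\leqq'_L$, $\leqq'_{LR}$ uniformly, since all the required properties are formal consequences of the fact that $\leqq_R$, $\leqq_L$, $\leqq_{LR}$ are preorders on $[X,Y]$ (established in the preceding lemma) and that $\sim_R$, $\sim_L$, $\sim_{LR}$ are the associated symmetrizations ($a \sim b \Leftrightarrow a \leqq b$ and $b \leqq a$). I would write the argument in detail for $\leqq'_R$ and then note that the $\leqq'_L$ and $\leqq'_{LR}$ cases are identical modulo obvious modifications: in the $L$ case one replaces post-composition by $[s] \in [X,X]$ with pre-composition by $[t] \in [Y,Y]$, and in the $LR$ case one works with pairs $([\phi],[\psi]) \in [X,X] \times [Y,Y]$ with composition taken coordinatewise.

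First I would verify well-definedness of $\leqq'_R$. Suppose $[f]_R = [f']_R$, $[g]_R = [g']_R$, and $[f] \leqq_R [g]$. Unpacking $[f] \sim_R [f']$ gives $[f'] \leqq_R [f]$, and unpacking $[g] \sim_R [g']$ gives $[g] \leqq_R [g']$; transitivity of the preorder $\leqq_R$ then yields $[f'] \leqq_R [f] \leqq_R [g] \leqq_R [g']$, hence $[f'] \leqq_R [g']$. Reflexivity of $\leqq'_R$ follows from taking $\phi = \op{id}_X$, and transitivity by composing witnesses: if $[f] = [g] \circ [\phi_1]$ and $[g] = [h] \circ [\phi_2]$, then $[f] = [h] \circ [\phi_2 \circ \phi_1]$ with $[\phi_2 \circ \phi_1] \in [X,X]$.

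The conceptual heart of the statement, and the reason for passing to the quotient in the first place, is antisymmetry. Suppose $[f]_R \leqq'_R [g]_R$ and $[g]_R \leqq'_R [f]_R$. Unwinding the definition of $\leqq'_R$, this says exactly $[f] \leqq_R [g]$ and $[g] \leqq_R [f]$, which is by definition $[f] \sim_R [g]$, so $[f]_R = [g]_R$. I do not expect a genuine technical obstacle; the only subtle point to emphasise is that this step really uses the design of $\sim_R$: the preorder $\leqq_R$ on $[X,Y]$ itself fails to be antisymmetric precisely because an $[s] \in [X,X]$ witnessing $[f] \leqq_R [g]$ need not be invertible, and the quotient by $\sim_R$ is engineered to enforce antisymmetry without demanding such an inverse — the ``mock self-homotopy equivalence'' $([s_1],[s_2])$ of the earlier remark playing the role that a genuine inverse would play in the $\mathcal{E}[X]$-setting.
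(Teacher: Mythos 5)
Your proof is correct: it is the standard verification that the quotient of a proset by the equivalence relation $a \sim b \Leftrightarrow a \leqq b$ and $b \leqq a$ carries a well-defined partial order, and the paper itself states this Proposition without proof, evidently regarding exactly this routine argument as understood. Your treatment of well-definedness via transitivity of the ambient preorder, the witness-composition for transitivity (including the coordinatewise pairs $([\phi],[\psi])$ in the $LR$ case), and the observation that antisymmetry holds by construction of the quotient all match what the paper's setup intends.
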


\begin{pro} The following canonical maps are monotone maps:
\begin{enumerate}
\item $\pi_R: ([X,Y], \leqq_R) \to ([X,Y]_R, \leqq'_R), \,\, \pi_R([f]):=[f]_R$
\item $\pi_L: ([X,Y], \leqq_L)  \to ([X,Y]_L, \leqq'_L), \, \,  \pi_L([f]):=[f]_L$
\item $\pi_{LR}:  ([X,Y], \leqq_{LR})  \to ([X,Y]_{LR}, \leqq'_{LR}),,  \, \, \pi_{LR}([f]):=[f]_{LR}$
\end{enumerate}
Hence each is a continuous map from a topological space (which is an Alexnadroff space) to a poset with the Alexandroff topology. In other words the homotopy set $[X,Y]$ can have these three poset-stratified space structures.
\end{pro}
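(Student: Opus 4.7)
The plan is to verify monotonicity directly from the definitions and then invoke the functor $\mathcal T: \mathcal Proset \to \mathcal Alex$ recalled in Section 2 to upgrade monotone maps of prosets to continuous maps between the associated Alexandroff spaces. In fact, once the previous proposition is in hand, the present statement is essentially tautological: the partial orders $\leqq'_R, \leqq'_L, \leqq'_{LR}$ on the quotient sets were defined precisely so that projection to equivalence classes preserves order.

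First I would handle $\pi_R$. Suppose $[f] \leqq_R [g]$ in $[X,Y]$; by definition of $\leqq_R$ this means there exists $[s] \in [X,X]$ with $[f] = [g] \circ [s]$. But this is exactly the condition defining $[f]_R \leqq'_R [g]_R$ in the previous proposition. Hence $\pi_R([f]) = [f]_R \leqq'_R [g]_R = \pi_R([g])$, so $\pi_R$ is monotone. The arguments for $\pi_L$ and $\pi_{LR}$ are carried out in exactly the same fashion: in each case the witness $[s]$ (resp. $[t]$, resp. the pair $([s],[t])$) that realizes $[f] \leqq_L [g]$ (resp. $\leqq_{LR}$) serves simultaneously as the witness for $[f]_L \leqq'_L [g]_L$ (resp. $[f]_{LR} \leqq'_{LR} [g]_{LR}$). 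No choice of representatives is involved, since we are passing from a fixed representative on the source to its equivalence class on the target.

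Finally, I would invoke the functor $\mathcal T: \mathcal Proset \to \mathcal Alex$: it sends the three monotone maps just established to continuous maps between the corresponding Alexandroff spaces. Concretely, one may verify continuity by hand using the base $\{U_x\}$ of the Alexandroff topology: if $U_{[g]_R} = \{[h]_R : [g]_R \leqq'_R [h]_R\}$ is a basic open set in $([X,Y]_R, \tau_{\leqq'_R})$, then $\pi_R^{-1}(U_{[g]_R}) = \{[f] : [g] \leqq_R [f]\}$ is upward closed in $([X,Y], \leqq_R)$ and hence open in $([X,Y], \tau_{\leqq_R})$; the same argument applies to $\pi_L$ and $\pi_{LR}$. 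This yields the three desired poset-stratified space structures on $[X,Y]$.

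There is no real obstacle here, since the genuine content—well-definedness and antisymmetry of $\leqq'_R, \leqq'_L, \leqq'_{LR}$ on the quotients—was already established in the preceding proposition; the present statement simply records the structural consequence of that work.
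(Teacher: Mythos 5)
Your argument is correct and is exactly the (omitted) justification the paper has in mind: by construction $[f]_R\leqq'_R[g]_R$ is defined to hold precisely when $[f]\leqq_R[g]$, so monotonicity of $\pi_R$ (and likewise $\pi_L$, $\pi_{LR}$) is immediate once the preceding proposition guarantees well-definedness and antisymmetry, and continuity then follows from the equivalence $\mathcal T:\mathcal Proset\to\mathcal Alex$ recalled in Section 2. The paper states this proposition without proof for the same reason, so there is nothing to add.
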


\begin{thm} Let $h\mathcal Top$ be the homotopy category. 
\begin{enumerate}
\item For any object $S \in Obj(h\mathcal Top)$, we have an associated covariant functor
$\frak {st}^S_*: h\mathcal Top \to \mathcal Strat$
such that
\begin{enumerate}
\item for each object $Y \in Obj(h\mathcal Top)$, 
$$\frak {st}^S_*(X) := \left ( ([S,X], \tau_{\leqq_R}), ([S,X], \tau_{\leqq_R}) \xrightarrow {\pi_R} ([S,X]_R, \leqq'_R) \right)$$
\item for a morphism $[f] \in [X,Y]$ , $\frak {st}^S_*([f])$ is the following commutative diagram:
$$\xymatrix
{
([S,X], \tau_{\leqq_R}) \ar [r]^{\pi_R} \ar [d]_{f_*} & ([S,X]_R, \leqq'_R)  \ar [d]^{f_*} \\
([S,Y], \tau_{\leqq_R})  \ar[r]_{\pi_R} & ([S,Y]_R, \leqq'_R) \\
}
$$
\end{enumerate}
\item For any object $T \in Obj(h\mathcal Top)$, we have an associated contravariant functor
$\frak {st}_T^*: h\mathcal Top \to \mathcal Strat$
such that
\begin{enumerate}
\item for each object $X \in Obj(h\mathcal Top)$, 
$$\frak {st}_T^*(X) := \left ( ([X,T], \tau_{\leqq_L}), ([X,T], \tau_{\leqq_L}) \xrightarrow {\pi_L} ([X,T]_L, \leqq'_L) \right)$$
\item for a morphism $[f] \in [X,Y]$  , $\frak {st}_T^*([f])$ is the following commutative diagram:
$$\xymatrix
{
([Y,T], \tau_{\leqq_L}) \ar [r]^{\pi_L} \ar [d]_{f^*} & ([Y,T]_L, \leqq'_L)  \ar [d]^{f^*} \\
([X,T], \tau_{\leqq_L})  \ar[r]_{\pi_L} & ([X,T]_L, \leqq'_L) \\
}
$$
\end{enumerate}
\end{enumerate}
\end{thm}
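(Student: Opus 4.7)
The plan is to verify the three functorial ingredients for each assignment: (i) the object values land in $\mathcal Strat$, (ii) the morphism values are well-defined morphisms in $\mathcal Strat$, and (iii) identities and compositions are preserved. Ingredient (i) is already the content of the preceding proposition, so the work consists of (ii) and (iii), and essentially reduces to one observation about post- and pre-composition.

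For part (1), given $[f]\in[X,Y]$, define $f_*\colon[S,X]\to[S,Y]$ by post-composition, $f_*([g]):=[f\circ g]$, which is well-defined on homotopy classes. The core computation is that $f_*$ preserves $\leqq_R$: if $[g_1]\leqq_R[g_2]$ is witnessed by $[s]\in[S,S]$ with $g_1\sim g_2\circ s$, then by associativity $f\circ g_1\sim f\circ g_2\circ s=(f\circ g_2)\circ s$, so $[f\circ g_1]\leqq_R[f\circ g_2]$ via the same $[s]$. Since continuity between Alexandroff spaces is equivalent to monotonicity with respect to the associated specialization preorders, this shows $f_*\colon([S,X],\tau_{\leqq_R})\to([S,Y],\tau_{\leqq_R})$ is continuous. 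Monotonicity also forces $f_*$ to send $\sim_R$-classes to $\sim_R$-classes (apply the preceding to both $[g_1]\leqq_R[g_2]$ and $[g_2]\leqq_R[g_1]$), so it descends to a well-defined monotone map $f_*\colon([S,X]_R,\leqq'_R)\to([S,Y]_R,\leqq'_R)$. Commutativity of the square in (1)(b) is then tautological: both compositions send $[g]$ to $[f\circ g]_R$.

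Functoriality of $\mathfrak{st}^S_*$ is immediate from the definition. The identity $[\mathrm{id}_X]$ gives $(\mathrm{id}_X)_*([g])=[\mathrm{id}_X\circ g]=[g]$ on both $[S,X]$ and $[S,X]_R$, and for composable $[f]\in[X,Y]$, $[f']\in[Y,Z]$ one has $(f'\circ f)_*([g])=[(f'\circ f)\circ g]=[f'\circ(f\circ g)]=f'_*\bigl(f_*([g])\bigr)$, so $(f'\circ f)_*=f'_*\circ f_*$ on both stories of the stratification.

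Part (2) is structurally dual: define $f^*\colon[Y,T]\to[X,T]$ by pre-composition, $f^*([h]):=[h\circ f]$. If $[h_1]\leqq_L[h_2]$ is witnessed by $[t]\in[T,T]$ with $h_1\sim t\circ h_2$, then $h_1\circ f\sim t\circ(h_2\circ f)$, so $[h_1\circ f]\leqq_L[h_2\circ f]$; continuity with respect to $\tau_{\leqq_L}$, descent to $[X,T]_L$, and commutativity of the square in (2)(b) follow exactly as above. Contravariance is $(\mathrm{id}_T)^*=\mathrm{id}$ together with $(f'\circ f)^*([h])=[h\circ f'\circ f]=f^*\bigl((f')^*([h])\bigr)$. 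No step presents a genuine obstacle; the only place to be careful is to notice that a single associativity identity $f\circ(g_2\circ s)=(f\circ g_2)\circ s$ (respectively $(t\circ h_2)\circ f=t\circ(h_2\circ f)$) simultaneously delivers continuity on the total space, well-definedness on the quotient poset, and commutativity of the structural square, so the entire theorem is really a packaging statement.
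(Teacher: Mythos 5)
Your proof is correct and follows exactly the route the paper intends: the paper states this theorem without an explicit proof, treating it as a routine consequence of the preceding lemmas and propositions, and your verification (post-/pre-composition preserves $\leqq_R$ resp. $\leqq_L$ via the same witness, hence is continuous for the Alexandroff topologies, descends to the quotient posets, and makes the squares commute, with functoriality following from associativity of composition) is precisely that omitted argument. Nothing is missing.
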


\begin{ex}\label{ex1}
Let $X=Y_1=K(\bQ , 3)\times K(\bQ ,2)$ and $Y_2=K(\bQ , 3)\times K(\bQ ,5)$.
Recall the Sullivan minimal model $M(S)$ of a space $S$ \cite{FHT}.
Then  homotopy sets are identified with DGA(differential graded algebra)-homotopy sets as
$$(1)\ \ \ [X,Y_1]=[M(Y_1),M(X)]=[(\Lambda (x,y),0), (\Lambda (x,y),0)   ]$$
$$(2)\ \ \ [X,Y_2]=[M(Y_2),M(X)]=[(\Lambda (x,z),0), (\Lambda (x,y),0)   ]$$
where $|x|=3$, $|y|=2$ and $|z|=5$.
They are isomorphic to $\bQ\times \bQ=\{ (a,b)\mid a,b\in\bQ\}$ by the DGA-maps 
$f(x)=ax$ and $f(y)=by$  for (1)
and $f(x)=ax$ and $f(z)=bxy$ for (2), respectively.
 %in four points 
%\textcolor{red}
Then their right equivalence classes are 
(1) $ [X,Y_1]_{  R}=\{ \alpha , \beta, \gamma, \delta\}$ and 
(2) $ [X,Y_2]_{  R}=\{ \alpha' , \beta', \gamma', \delta'\}$ %, which are both four points.
 where $\alpha=\alpha'=[(0,0)]_R$,  $\beta=\beta'=[(1,0)]_R$,  $\gamma=\gamma'=[(0,1)]_R$ and  $\delta=\delta'=[(1,1)]_R$. 
However their
poset structures are given as the 
following Hasse diagrams:
$${\tiny \xymatrix{
 {\large (1)}&\delta\ar@{-}[ld] \ar@{-}[dr]&&\\
\beta\ar@{-}[dr]&&\gamma \ar@{-}[dl] &\\
&\alpha%
} \xymatrix{
{\large  (2)}&\delta'&\\
&\beta'\ar@{-}[u]&\gamma'\\
&\alpha' \ar@{-}[u]\ar@{-}[ur]&\\
}}$$
respectively. 
In particular, there does not exist $\gamma'\leqq_R'\delta'$ in (2)  since
$\psi (M(f)(z))=\psi (xy)=0$ if $\psi (M(f)(x))=\psi (x)=0$
for $\psi :M(X)\to M(X)$.
For both cases, the stratifications of  $\bQ\times \bQ$ are given as  $$\bQ\times \bQ=e_{\alpha}\cup e_{\beta}\cup e_{\gamma}\cup e_{\delta}=e_{\alpha'}\cup e_{\beta'}\cup e_{\gamma'}\cup e_{\delta'}$$
where $e_{\alpha}=e_{\alpha'}=\{(0,0)\}$, $e_{\beta}=e_{\beta'}=\{ (a,0) \mid a\neq 0\}$,   $e_{\gamma}=e_{\gamma'}=\{ (0,b) \mid b\neq 0\}$
and  $e_{\delta}=e_{\delta'}=\{ (a,b) \mid ab\neq 0\}$.
However  the topologies are different.
Indeed, $\overline{e_{\delta}}=\bQ\times \bQ$ in (1) but
$\overline{e_{\delta'}}$ does not contain $e_{\gamma'}$ in (2).

If a map $f:Y_1\to Y_2$ is given by $M(f):(\Lambda (x,z),0)\to (\Lambda (x,y),0)$
with $M(f)(x)=x$ and $M(f)(z)=xy$, the induced map of homotopy sets 
$f_*:[X,Y_1]=\bQ\times \bQ\to [X,Y_2]=\bQ\times \bQ$
 is given by $f_*(a,b)=(a,ab)$.
Then the poset map 
$f_*:[X,Y_1]_R\to [X,Y_2]_R$ 
is given by $f_*(\alpha )=f_*(\gamma)=\alpha'$, $f_*(\beta)=\beta'$ and $f_*(\delta)=\delta'$.
%where the top point is the trivial fibrations. 
%However    $[X_i,Y_i]\cong \bQ\times \bQ$ in both cases.

\end{ex}

%%%%%%%%%%%%%%%%%%%%%%%%%%%%%%%%%%%%%%%%%%%%%%%%%%%%%%

\section{Some applications}
%%%%%%%%%
\begin{defn} For a group $G$ let $\mathcal Sub(G)$ be the set of all the subgroups of the group $G$. For subgroups $A, B \in \mathcal Sub(G)$ we define the order $A \leqq B $ by $A \subseteq B$, which is a partial order. 
\end{defn}
%%%%%%%%%%%%%
\begin{lem} Let $H_*(-)$ be the homology theory with a coefficient ring $\mathscr R$. Then the following maps are well-defined and monotone (order-preserving) maps:
\begin{enumerate}
\item $\op{Im}_{H_*}:([X,Y], \leqq_R) \to \left (\mathcal Sub(H_*(Y)), \leqq \right), \, \op{Im}_{H_*}([f]):= \op{Im} (f_*: H_*(X) \to H_*(Y)).$
\item $\op{Im}'_{H_*}:([X,Y]_R, \leqq'_R) \to \left (\mathcal Sub(H_*(Y)), \leqq \right), \, \op{Im}'_{H_*}([f]_R):= \op{Im}_{H_*}([f]).$
\end{enumerate}
We have the following commutative diagram:
$$\xymatrix
{
([X,Y], \leqq_R) \ar [d]_{\op{id}_{[X,Y]}} \ar [r]^{\pi_R} & ([X,Y]_R , \leqq'_R) \ar[d]^{\op{Im}'_{H_*}} \\
([X,Y], \leqq_R)  \ar[r]_{\op{Im}_{H_*}\, \, \, } & \left (\mathcal Sub(H_*(Y)), \leqq \right).
}
$$

\end{lem}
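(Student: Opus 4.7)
The plan is to reduce everything to two classical inputs---the homotopy invariance of $H_*$ and its functoriality with respect to composition---and to verify five items in the natural order: (i) well-definedness of $\op{Im}_{H_*}$ on $[X,Y]$, (ii) its monotonicity, (iii) well-definedness of $\op{Im}'_{H_*}$ on the quotient $[X,Y]_R$, (iv) its monotonicity with respect to $\leqq'_R$, and (v) commutativity of the displayed square. Each step is essentially a one-line calculation.

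For (i), I would observe that if $f \simeq g$ then $f_* = g_*$ as homomorphisms $H_*(X) \to H_*(Y)$, so $\op{Im}(f_*) = \op{Im}(g_*)$ as subgroups of $H_*(Y)$, and hence $\op{Im}_{H_*}([f])$ depends only on the homotopy class. For (ii), assuming $[f] \leqq_R [g]$, I would pick $[s] \in [X,X]$ with $f \simeq g \circ s$; functoriality of $H_*$ then gives $f_* = g_* \circ s_*$, so $\op{Im}(f_*) = g_*\!\left(s_*(H_*(X))\right) \subseteq g_*(H_*(X)) = \op{Im}(g_*)$, which is exactly $\op{Im}_{H_*}([f]) \leqq \op{Im}_{H_*}([g])$ in $\mathcal Sub(H_*(Y))$.

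For (iii) and (iv), I would invoke (ii) twice: if $[f] \sim_R [g]$ then both $[f] \leqq_R [g]$ and $[g] \leqq_R [f]$, so (ii) yields $\op{Im}(f_*) \subseteq \op{Im}(g_*) \subseteq \op{Im}(f_*)$, forcing equality; this shows that the rule $\op{Im}'_{H_*}([f]_R) := \op{Im}_{H_*}([f])$ is well-defined on equivalence classes, and monotonicity with respect to $\leqq'_R$ follows immediately from (ii) via the defining equivalence $[f]_R \leqq'_R [g]_R \Longleftrightarrow [f] \leqq_R [g]$. Finally (v) is tautological: $\op{Im}'_{H_*} \circ \pi_R([f]) = \op{Im}'_{H_*}([f]_R) = \op{Im}_{H_*}([f])$ by the very definition of $\op{Im}'_{H_*}$, while precomposing with $\op{id}_{[X,Y]}$ does nothing.

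There is no genuine obstacle; the only conceptual point worth flagging is directional. Precomposition by $s$ shrinks the image, so the order $\leqq_R$---defined via precomposition on the source---maps to inclusion (not reverse inclusion) in $\mathcal Sub(H_*(Y))$. This is the reason the lemma is formulated for $\leqq_R$ and the standard order on $\mathcal Sub(H_*(Y))$, rather than for $\leqq_L$ (which is instead the natural setting for the cohomology analogue, where postcomposition appears on the target).
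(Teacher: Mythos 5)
Your proposal is correct and follows essentially the same route as the paper: functoriality gives $f_* = g_* \circ s_*$, hence $\op{Im}(f_*) \subseteq \op{Im}(g_*)$, and applying this in both directions for $\sim_R$ yields well-definedness of $\op{Im}'_{H_*}$; the remaining points are tautological, as you note. Your write-up is in fact slightly more complete than the paper's, which leaves the homotopy-invariance step, the monotonicity of $\op{Im}'_{H_*}$, and the commutativity of the square implicit.
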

%%%%%%%%%%
\begin{proof} Let $[f] \leqq_R [g]$. Thus $\exists t:X \to X$ such that $f \sim g \circ t.$ Hence $f_* = g_* \circ t_*$, i.e., the following diagram commutes:
$$\xymatrix
{
 H_*(X) \ar [d]_{t_*} \ar [r]^{f_*} & H_*(Y),\\
 H_*(X) \ar[ur]_{g_*} & }
$$

\noindent
which implies that
$\op{Im} (f_*:  H_*(X) \to H_*(Y)) \subset \op{Im} (g_*: H_*(X) \to H_*(Y))$. Thus $\op{Im}_{H_*}([f]) \subset \op{Im}_{H_*}([g]).$ Hence $\op{Im}_{H_*}:([X,Y], \leqq_R) \to \left (\mathcal Sub(H_*(Y)), \leqq \right)$ is a monotone map.
For $\op{Im}'_{H_*}$ we just observe that if $[f] \sim_R[g]$, i.e., $\exists t_1:X \to X, t_2:X \to X$ sucht that $f \sim g \circ t_1$ and $g \sim g \circ t_2$, then it follows from the above that $\op{Im} (f_*: H_*(X) \to H_*(Y)) = \op{Im} (g_*: H_*(X) \to H_*(Y))$, i.e., $\op{Im}_{H_*}([f]) = \op{Im}_{H_*}([g]).$ Thus $\op{Im}_{H_*}([f]_R):= \op{Im}_{H_*}([f])$ is well-defined.
\end{proof}

%%%%%%%%%%%%%
Similarly we get the following:
\begin{lem} Let $H^*(-)$ be the cohomology theory with a coefficient ring $\mathscr R$. Then the following maps are well-defined and monotone (order-preserving) maps:
\begin{enumerate}
\item $\op{Im}_{H^*}:([X,Y], \leqq_L) \to \left (\mathcal Sub(H^*(X)), \leqq \right), \, \op{Im}_{H^*}([f]):= \op{Im} (f^*: H^*(Y) \to H_*(X)).$
\item $\op{Im}'_{H^*}:([X,Y]_L, \leqq'_L) \to \left (\mathcal Sub(H^*(X)), \leqq \right), \, \op{Im}'_{H^*}([f]_L):= \op{Im}_{H^*}([f]).$
\end{enumerate}
We have the following commutative diagram:
$$
\xymatrix
{
([X,Y], \leqq_L) \ar [d]_{\op{id}_{[X,Y]}} \ar [r]^{\pi_L} & ([X,Y]_L , \leqq'_L) \ar[d]^{\op{Im}'_{H^*}} \\
([X,Y], \leqq_L)  \ar[r]_{\op{Im}_{H^*}\, \, \, } & \left (\mathcal Sub(H^*(X)), \leqq \right).
}
$$
\end{lem}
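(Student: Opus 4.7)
The proof is entirely dual to the preceding homology lemma, exploiting the contravariance of $H^*$ instead of covariance, and in place of $\leqq_R$ using the preorder $\leqq_L$ whose direction matches the induced inclusion.

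First I would check that $\op{Im}_{H^*}([f])$ is a well-defined subgroup of $H^*(X)$ depending only on the homotopy class $[f]$. This is immediate since cohomology is a homotopy functor: $f \sim f'$ implies $f^* = (f')^*$, hence $\op{Im}(f^*) = \op{Im}((f')^*)$. Next, for monotonicity of $\op{Im}_{H^*}:([X,Y], \leqq_L) \to (\mathcal Sub(H^*(X)), \leqq)$, I would take $[f] \leqq_L [g]$ and unpack the definition: there exists $t:Y \to Y$ with $f \sim t \circ g$. Applying the contravariant functor $H^*$ gives $f^* = g^* \circ t^*$, i.e., the triangle
$$\xymatrix{H^*(Y) \ar[d]_{t^*} \ar[r]^{f^*} & H^*(X)\\ H^*(Y) \ar[ur]_{g^*} & }$$
commutes. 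Since $f^* = g^* \circ t^*$ factors through $g^*$, we get $\op{Im}(f^*) \subseteq \op{Im}(g^*)$, which is exactly $\op{Im}_{H^*}([f]) \leqq \op{Im}_{H^*}([g])$.

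Second, to handle the induced map $\op{Im}'_{H^*}$ on the quotient $[X,Y]_L$, I would show well-definedness by applying the previous step symmetrically: if $[f] \sim_L [g]$, i.e. $[f] \leqq_L [g]$ and $[g] \leqq_L [f]$, then $\op{Im}(f^*) \subseteq \op{Im}(g^*)$ and $\op{Im}(g^*) \subseteq \op{Im}(f^*)$, giving equality. Hence $\op{Im}'_{H^*}([f]_L) := \op{Im}_{H^*}([f])$ does not depend on the chosen representative. Monotonicity of $\op{Im}'_{H^*}$ with respect to $\leqq'_L$ then follows directly from the monotonicity of $\op{Im}_{H^*}$ by the very definition $[f]_L \leqq'_L [g]_L \Leftrightarrow [f] \leqq_L [g]$.

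Finally, commutativity of the stated square is a tautology: both composites send $[f]$ to $\op{Im}(f^*:H^*(Y) \to H^*(X))$, since $\op{Im}'_{H^*}\circ \pi_L([f]) = \op{Im}'_{H^*}([f]_L) = \op{Im}_{H^*}([f])$ by construction. There is really no main obstacle here; the only point to be careful about is keeping the variance straight, so that the factorization $f^* = g^* \circ t^*$ produces the inclusion $\op{Im}(f^*) \subseteq \op{Im}(g^*)$ in the correct direction, matching $\leqq_L$ (rather than $\leqq_R$ as in the homology case).
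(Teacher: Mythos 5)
Your proposal is correct and follows exactly the route the paper intends: the paper proves the homology version explicitly and introduces this lemma with ``Similarly we get the following,'' and your argument is precisely that dualization, using $f \sim t \circ g \Rightarrow f^* = g^* \circ t^*$ to get $\op{Im}(f^*) \subseteq \op{Im}(g^*)$, then symmetrizing for well-definedness on $[X,Y]_L$. Nothing is missing.
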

%%%%%%%%%%

%%%%%%%%%%%%
\begin{cor}  Let $H_*(-)$ and $H^*(-)$ be as above.
\begin{enumerate}
\item
For $\forall S \in Obj(h\mathcal Top)$, we have a {\bf \emph{covariant}} functor
$\frak {st}^S_{ H_*}: h\mathcal Top \to \mathcal Strat$
such that
\begin{enumerate}
\item for each object $X \in Obj(h\mathcal Top)$, 

$\frak {st}^S_{H_*}(X) :=  \left ( ([S,X], \tau_{\leqq_R}), ([S,X], \tau_{\leqq_R}) \xrightarrow {\op{Im}_{H_*}} \left (\mathcal Sub(H_*(X)), \leqq \right) \right)$.
\item for a morphism $[f] \in [X,Y]$ , $\frak {st}^S_{H_*}([f])$ is the following commutative diagram:
$$
\xymatrix
{
([S,X], \tau_{\leqq_R}) \ar [r]^{\op{Im}_{H_*} \quad } \ar [d]_{f_*} & \left (\mathcal Sub(H_*(X)), \leqq \right) \ar [d]^{f_*} \\
([S,Y], \tau_{\leqq_R})  \ar[r]_{\op{Im}_{H_*} \quad } & \left (\mathcal Sub(H_*(Y)), \leqq \right).
}
$$
\end{enumerate}
\item $\op{Im}'_{H_*}$ gives rise to a natural transformation $\op{Im}'_{H_*}: \frak {st}^S_*(-) \to \frak {st}^S_{H_*}(-)$, namely for a morphism $[f] \in [X, Y]$ we have the following commutative diagram:
$$
\xymatrix
{
\frak {st}^S_*(X) \ar [d]_{f_*} \ar [r]^{\op{Im}'_{H_*}} & \frak {st}^S_{H_*}(X) \ar[d]^{f_*} \\
\frak {st}^S_*(Y)  \ar[r]_{\op{Im}'_{H_*}} & \frak {st}^S_{ H_*}(Y).}
$$
Namely we have the following commutative cube:
$$ 
\xymatrix
{ ([S,X], \tau_{\leqq_R}) \ar[dd]_{f_*} \ar[rd]^{\pi_R} \ar[rr]^{\op{id}_{[S,X]}} && ([S,X], \tau_{\leqq_R}) \ar'[d][dd]^{f_*} \ar[rd]^{\op{Im}_{H_*}} \\
& ([S,X]_R, \leqq'_R) \ar[dd]_{f_*} \ar[rr] ^{\op{Im}'_{H_*} \quad \quad \quad \quad  }  &&  \left (\mathcal Sub(H_*(X)), \leqq \right) \ar[dd]^{f_*} \\
([S,Y], \tau_{\leqq_R}) \ar'[r] [rr]_{\op{id}_{[S,Y]} \quad \quad }  \ar[rd]_{\pi_R} && ([S,Y], \tau_{\leqq_R})  \ar[rd]_{\op{Im}_{H_*}} \\
& ([S,Y]_R, \leqq'_R) \ar[rr] _{\op{Im}'_{H_*} \quad \quad  \quad }  &&  \left (\mathcal Sub(H_*(Y)), \leqq \right).
}
$$

\item For any object $T \in Obj(h\mathcal Top)$, we have an associated {\bf \emph{contravariant}} functor
$\frak {st}_T^{H^*}: h\mathcal Top \to \mathcal Strat$
such that
\begin{enumerate}
\item for each object $X \in Obj(h\mathcal Top)$, 
$$\frak {st}_T^{H^*}(X) := \left ( ([X,T], \tau_{\leqq_L}), ([X,T], \tau_{\leqq_L}) \xrightarrow {\op{Im}_{H^*}} \left (\mathcal Sub(H^*(X)), \leqq \right)  \right)$$
\item for a morphism $[f] \in [X,Y]$  , $\frak {st}_T^{H^*}([f])$ is the following commutative diagram:
$$
\xymatrix
{
([Y,T], \tau_{\leqq_L}) \ar [r]^{\op{Im}_{H^*} \quad } \ar [d]_{f^*} & \left (\mathcal Sub(H^*(Y)), \leqq \right)  \ar [d]^{f^*} \\
([X,T], \tau_{\leqq_L})  \ar[r]_{\op{Im}_{H^*} \quad } & \left (\mathcal Sub(H^*(X)), \leqq \right) .
}
$$
\end{enumerate}
\item $\op{Im}'_{H^*}$ gives rise to a natural transformation $\op{Im}'_{H^*}: \frak {st}^*_T(-) \to \frak {st}_T^{H^*}(-)$, namely for a morphism $[f] \in [X, Y]$ we have the following commutative diagram:
$$\xymatrix
{
\frak {st}_T^*(Y) \ar [d]_{f^*} \ar [r]^{\op{Im}'_{H^*}} & \frak {st}_T^{H^*}(Y)\ar[d]^{f_*} \\
\frak {st}_T^*(X)  \ar[r]_{\op{Im}'_{H^*}} & \frak {st}_T^{H^*}(X).}
$$
Namely we have the following commutative cube:
$$
\xymatrix
{ ([Y,T], \tau_{\leqq_L}) \ar[dd]_{f^*} \ar[rd]^{\pi_L} \ar[rr]^{\op{id}_{[Y,T]}} && ([Y,T], \tau_{\leqq_L}) \ar'[d][dd]^{f^*} \ar[rd]^{\op{Im}_{H^*}} \\
& ([Y,T]_L, \leqq'_L) \ar[dd]_{f^*} \ar[rr] ^{\op{Im}'_{H^*}\quad \quad \quad \quad  }  &&  \left (\mathcal Sub(H^*(Y)), \leqq \right)  \ar[dd]^{f^*} \\
([X,T], \tau_{\leqq_L}) \ar'[r] [rr]_{\op{id}_{[X,T]} \quad \quad }  \ar[rd]_{\pi_L} && ([X,T], \tau_{\leqq_L})  \ar[rd]_{\op{Im}_{H^*}} \\
& ([X,T]_L, \leqq'_L) \ar[rr] _{\op{Im}'_{H^*} \quad \quad  \quad }  &&  \left (\mathcal Sub(H^*(X)), \leqq \right).
}
$$

\end{enumerate}
\end{cor}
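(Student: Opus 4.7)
The plan is to treat the corollary as an essentially formal consequence of the two preceding lemmas together with the (co)variant functoriality of (co)homology. Once the monotone maps $\op{Im}_{H_*}$, $\op{Im}'_{H_*}$, $\op{Im}_{H^*}$, $\op{Im}'_{H^*}$ have been established as continuous maps into the Alexandroff topologies on the subgroup lattices, and once the commutative triangles $\op{Im}'_{H_*} \circ \pi_R = \op{Im}_{H_*}$ and $\op{Im}'_{H^*} \circ \pi_L = \op{Im}_{H^*}$ have been recorded, the remaining work is bookkeeping around large commutative diagrams.

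For the functoriality statements (1) and (3), I first observe that any $f:X\to Y$ induces a monotone map $f_*:\mathcal Sub(H_*(X)) \to \mathcal Sub(H_*(Y))$, $A\mapsto f_*(A)$, because the image operation preserves inclusions; dually $f^*:\mathcal Sub(H^*(Y))\to \mathcal Sub(H^*(X))$, $B\mapsto f^*(B)$, is monotone. The naturality square attached to $\frak{st}^S_{H_*}([f])$ is then the equality, for every $[\phi]\in [S,X]$,
\[ f_*(\op{Im}_{H_*}([\phi])) = f_*(\phi_*(H_*(S))) = (f\circ \phi)_*(H_*(S)) = \op{Im}_{H_*}(f_*[\phi]), \]
which uses nothing more than $(f\circ \phi)_* = f_*\circ \phi_*$. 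Functoriality of $\frak{st}^S_{H_*}$ itself (preservation of identities and composites) follows from the corresponding axioms of $H_*$; part (3) is proved dually using the contravariance of $H^*$.

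For the natural-transformation statements (2) and (4), I verify each cube face by face. The top and bottom faces are the commutative triangles $\op{Im}'_{H_*}\circ \pi_R = \op{Im}_{H_*}$ at the objects $X$ and $Y$, already supplied by the preceding lemma; the left face is the square $\frak{st}^S_*([f])$ whose commutativity was established in the Theorem; the right face is the square $\frak{st}^S_{H_*}([f])$ just verified in part (1); and the two remaining faces collapse to trivial identities of the form $f_*\circ \op{id} = \op{id}\circ f_*$. Commutativity of all six faces forces commutativity of the cube, which is precisely the statement that $\op{Im}'_{H_*}$ is a natural transformation from $\frak{st}^S_*$ to $\frak{st}^S_{H_*}$. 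The contravariant cube in (4) is handled identically with all horizontal arrows reversed.

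The only obstacle is cosmetic rather than conceptual: the same symbol $f_*$ simultaneously denotes the induced self-maps on $[S,-]$, on $[S,-]_R$, and on $\mathcal Sub(H_*(-))$ (and likewise $f^*$ on three different targets in the cohomological case), so the verification consists largely in keeping these three roles distinct while reading the cube. No homotopy-theoretic ingredient beyond the functoriality of (co)homology is needed.
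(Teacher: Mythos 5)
Your overall strategy is the right one and is exactly what the paper intends: the corollary is stated there without any written proof, as a formal consequence of the two preceding lemmas, of the earlier Theorem on $\frak {st}^S_*$ and $\frak {st}^*_T$, and of the identity $(f\circ \phi)_* = f_* \circ \phi_*$ (dually $(\psi \circ f)^* = f^* \circ \psi^*$). Your verification of parts (1) and (3), including the observation that $A \mapsto f_*(A)$ is monotone on $\mathcal Sub(H_*(-))$, is complete.

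There is, however, one slip in the face-by-face cube argument for parts (2) and (4). After the top and bottom faces (the lemma's triangles at $X$ and at $Y$), the left face (the square $\frak {st}^S_*([f])$ from the Theorem) and the right face (the square $\frak {st}^S_{H_*}([f])$ from your part (1)), the two remaining faces are \emph{not} both trivial. The back face, with horizontal edges $\op{id}_{[S,X]}$ and $\op{id}_{[S,Y]}$, is indeed of the form $f_*\circ \op{id}=\op{id}\circ f_*$. But the front face has horizontal edges $\op{Im}'_{H_*}:([S,X]_R,\leqq'_R)\to \mathcal Sub(H_*(X))$ and $\op{Im}'_{H_*}:([S,Y]_R,\leqq'_R)\to \mathcal Sub(H_*(Y))$, and it asserts $f_*\circ \op{Im}'_{H_*} = \op{Im}'_{H_*}\circ f_*$ on $[S,X]_R$; this is precisely the poset-level content of the naturality of $\op{Im}'_{H_*}$, i.e.\ the one claim in part (2) not already contained in earlier statements, so it cannot be dismissed as an identity. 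It does hold, by the same computation as in your part (1) carried out on representatives: for $[\phi]_R \in [S,X]_R$ one has $f_*(\op{Im}'_{H_*}([\phi]_R)) = f_*(\phi_*(H_*(S))) = (f\circ\phi)_*(H_*(S)) = \op{Im}'_{H_*}([f\circ\phi]_R) = \op{Im}'_{H_*}(f_*([\phi]_R))$, using that $\op{Im}'_{H_*}$ is well defined on $\sim_R$-classes. (Alternatively: the other five faces commute and $\pi_R$ is surjective, which forces the sixth.) With this one face repaired your argument is complete, and the dual remark applies verbatim to the front face of the contravariant cube in part (4).
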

%%%%%%%%%%%%% 
%%%%%%%%%%%%%%%% A connection to Thom's dependence %%%%%%
The case of $\op{Im}_{H^*}: ([X,T], \leqq_L) \to \left(\mathcal Sub (H^*(X), \leqq \right )$ is related to Thom's dependence of cohomology classes \cite{Thom} mentioned %above.
in the introduction. 
To explain this, we recall the definition of dependence of cohomology classes (e.g., see \cite{Hil1}).

\begin{defn}[R. Thom] The cohomology class $\beta \in H^q(X; B)$ \emph{depends on} the cohomology class $\alpha \in H^p(X; A)$, where $A, B$ are coefficient rings, if, for all (perhaps infinite) polyhedra $Y$ and all maps $f: X \to Y$ such that $\alpha \in f^*(H^p(Y; A))$, we have $\beta \in f^*(H^q(Y;B))$.
\end{defn}

Thom \cite{Thom} proves the following proposition (see \cite{Hil1}). For this we recall that the cohomology theory is representable by the Eilenberg-Maclane space, i.e., $H^j(X, \Lambda) \cong [X, K(\Lambda,j)]$ where $K(R,j)$ is the Eilenberg-Maclane space whose homotopy type is completely characterized by the homotopy groups $\pi_j(K(\Lambda,j)) = \Lambda$ and $\pi_i(K(\Lambda,j))= 0, i \not = j.$ Then by the Hurewicz Theorem we have $H_j(K(\Lambda,j); \mathbb Z)\cong \pi_j(K(\Lambda,j)) = \Lambda$ and $H_d(K(\Lambda,j)) = 0$ for $d<j$. Hence by the universal coefficinet theorem we have the isomorphism
$$\Phi:H^j(K(\Lambda,j);\Lambda) \cong Hom(H_j(K(\Lambda,j);\mathbb Z), \Lambda) \cong Hom (\pi_j(K(\Lambda,j)) , \Lambda) \cong Hom(\Lambda, \Lambda).$$
Let $u :=\Phi^{-1}(\op{id}_{\Lambda})$ for the identity map $\op{id}_{\Lambda}: \Lambda \to \Lambda$. Then the isomorphism $\Theta: [X, K(\Lambda,j)] \cong H^j(X, \Lambda)$ is obtained by $\Theta ([f]):=f^*u$ where $f^*:H^j(K(\Lambda,j);\Lambda) \to H^j(X,\Lambda)$.
%%%%%%%%%%%%%%
\begin{pro}[R. Thom \cite{Thom}] Let $\alpha \in H^p(X; A) \cong [X, K(A, p)]$ and let $f_{\alpha}:X \to K(A, p)$ be a map such that the homotopy class $[f_{\alpha}]$ corresponds to $\alpha$.
Then $\beta \in H^q(X,B)$ depends on $\alpha$ if and only if $\beta \in f^*_{\alpha}(H^q(K(A,p); B))$.
\end{pro}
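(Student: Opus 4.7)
The plan is to prove the two implications separately, leveraging the representability isomorphism $\Theta:[X,K(\Lambda,j)] \xrightarrow{\cong} H^j(X;\Lambda)$ constructed above via the fundamental class $u \in H^p(K(A,p);A)$ (so that $\alpha = f_\alpha^* u$). The forward direction is a direct specialization of the dependence hypothesis, while the backward direction uses the universal property of $K(A,p)$ to factor any ``test map'' through $f_\alpha$ up to homotopy.

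For $(\Rightarrow)$, I would apply the definition of dependence to the particular choice $Y := K(A,p)$ and $f := f_\alpha : X \to K(A,p)$. The fundamental class $u \in H^p(K(A,p);A)$ satisfies $f_\alpha^* u = \Theta([f_\alpha]) = \alpha$, so $\alpha \in f_\alpha^*(H^p(K(A,p);A))$. The hypothesis that $\beta$ depends on $\alpha$ then yields $\beta \in f_\alpha^*(H^q(K(A,p);B))$ immediately.

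For $(\Leftarrow)$, suppose $\beta = f_\alpha^*(\gamma)$ for some $\gamma \in H^q(K(A,p);B)$, and let $Y$ be an arbitrary polyhedron with a map $f:X \to Y$ such that $\alpha \in f^*(H^p(Y;A))$. Write $\alpha = f^*(\alpha')$ for some $\alpha' \in H^p(Y;A)$, and invoke representability on $Y$ to obtain a classifying map $g:Y \to K(A,p)$ with $g^* u = \alpha'$. Then $(g \circ f)^* u = f^* g^* u = f^*\alpha' = \alpha = f_\alpha^* u$, so by the bijectivity of $\Theta$ on $X$ we conclude $[g \circ f] = [f_\alpha]$, i.e.\ $g \circ f \simeq f_\alpha$. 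Functoriality of cohomology then gives $f_\alpha^* = f^* \circ g^*$, so
\[
\beta = f_\alpha^*(\gamma) = f^*\bigl(g^*(\gamma)\bigr) \in f^*(H^q(Y;B)),
\]
which is what dependence requires.

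The substantive step, and the only place one must be careful, is the reverse direction's use of representability to lift the equality of cohomology classes $(g \circ f)^* u = f_\alpha^* u$ to a homotopy $g \circ f \simeq f_\alpha$. This is precisely why the definition of dependence restricts to polyhedra (or, more generally, to spaces of the homotopy type of a CW complex): the bijection $[-,K(A,p)] \cong H^p(-;A)$ must hold on $Y$ and on $X$. Once this is granted, everything else is bookkeeping with the fundamental class and the naturality square for $\Theta$.
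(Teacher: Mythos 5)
Your proof is correct. The paper itself supplies no proof of this proposition---it is quoted from Thom with a pointer to Hilton's paper---so there is nothing internal to compare against; your argument is the standard one via representability, and both directions are sound. The only point worth making explicit is that specializing the definition of dependence to $Y = K(A,p)$ in the forward direction requires $K(A,p)$ to be an admissible test space, i.e.\ a (possibly infinite) polyhedron; this holds because Eilenberg--MacLane spaces can be realized as CW complexes, hence up to homotopy as polyhedra, and the condition $\alpha \in f^*(H^p(Y;A))$ is homotopy-invariant. With that remark, the forward direction is the immediate specialization you describe, and the backward direction correctly uses injectivity of $\Theta$ on $X$ to upgrade $(g\circ f)^*u = f_\alpha^*u$ to $g\circ f \simeq f_\alpha$ and then factor $\beta$ through $f^*$.
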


Using this proposition we can get the following result. By the monotone (order-preserving) map
$$\op{Im}_{H^*}:([X, K(A,p)], \leqq_L) \to \left (\mathcal Sub(H^*(X;B)), \leqq \right)$$
the image $\op{Im}_{H^*}([f_{\alpha}]) =f_{\alpha}^*((H^q(K(A,p); B))$ is \emph{nothing but the subgroup of all the cohomology classes $\beta \in H^q(X;B)$ depending on the cohomology class $\alpha$}.

We also see that let $\alpha, \alpha' \in H^p(X, A)$ and let $f_{\alpha}, f_{\alpha'} :X \to K(A, p)$ be the corresponding maps. Then, if $f_{\alpha}$ depends on $f_{\alpha'}$, i.e., $[f_{\alpha}] \leqq_L [f_{\alpha'}]$ by our terminology (in other words, we can define the order of the cohomology classes $\alp \leqq_L \alp'$ by this),  then we have $(\alpha \in) \op{Im}_{H^*}([f_{\alpha}]) \subset \op{Im}_{H^*}([f_{\alpha'}])$, i.e., $\op{Im}_{H^*}([f_{\alpha}]) \leqq \op{Im}_{H^*}([f_{\alpha'}])$.  Thus, that $\alpha$ depends on $\alpha'$ is equivalent to that $\op{Im}_{H^*}([f_{\alpha}]) \leqq \op{Im}_{H^*}([f_{\alpha'}])$.\\
%%%%%%%%%%%

Here is another application to vector bundles and characteristic classes (e.g., see \cite{MS}, \cite{Ha}). 
Let $\op{Vect}_n(X)$ be the set of isomorphism classes of complex vector bundles of rank $n$. Then it is well-known that
$$\op{Vect}_n(X) \cong [X, G_n(\mathbb C^{\infty})]$$
where $G_n(\mathbb C^{\infty})$ is the infinite Grassmann manifold of complex planes of dimension $n$, i.e., the classifying space of complex vector bundles of rank $n$.
This isomorphism is by the correspondence $[E] \longleftrightarrow [f_E]$, where $f_E:X \to G_n(\mathbb C^{\infty})$ is a classifying map of $E$, i.e., $E = f_E^* \gamma^n,$ 
where $\gamma^n$ is the universal complex vector bundle of rank $n$ over $G_n(\mathbb C^{\infty})$.

By the isomorphism  $\op{Vect}_n(X) \cong [X, G_n(\mathbb C^{\infty})]$ we can consider the peorder of $[E]$ and $[F]$:
$$[E] \leqq_L [F] \Longleftrightarrow [f_E] \leqq_L [f_F],$$
where $f_E, f_F:X \to G_n(\mathbb C^{\infty})$ are respectively the classifying maps of $E$ and $F$.

Then we have the following well-defined monotone (order-preserving) map:
$$ \op{Im}_{H^*}: (\op{Vect}_n(X), \leqq_L) \to (\mathcal Sub(H^*(X; \mathbb Z)), \leqq )$$
defined by 
$ \op{Im}_{H^*}([E]):= \op{Im}\Bigl (f_E^*:H^*(G_n(\mathbb C^{\infty});\mathbb Z) \to H^*(X; \mathbb Z) \Bigr ).$
By the definition of characteristic classes, for each element $\alp \in H^*(G_n(\mathbb C^{\infty}))$, the pullback $f_E^*(\alp)$ is called the characteristic class of $E$ defined by the class $\alp$, and denoted by
$\alp(E) := f_E^*(\alp)$. It is well-known (e.g., see \cite{MS}) that $H^*(G_n(\mathbb C^{\infty})) = \mathbb Z[c_1, c_2, \cdots, c_n]$ is generated by $1$ and the Chern classes $c_1, c_2, \cdots, c_n$ of the universal bundle $\ga^n$. Here $1, c_1, c_2, \cdots, c_n$ are linearly independent. \emph{ $\op{Im}\Bigl (f_E^*:H^*(G_n(\mathbb C^{\infty});\mathbb Z) \to H^*(X; \mathbb Z) \Bigr )$ is nothing but the subring consisting of \underline{all} the characteristic classes of $E$}, which %is 
could be also denoted by $\mathbb Z [c_1(E), c_2(E), \cdots, c_n(E)]$. Here we should note that $1, c_1(E), c_2(E), \cdots, c_n(E)$ are \emph{not} linearly independent in general. Let us denote this subring by $\op{Char}(E)$.
Therefore we have
$[E] \leqq_L[F] \Longrightarrow \op{Char}(E) \subseteq \op{Char}(F).$
We also get that 
$[E] \sim_L[F] \Longrightarrow \op{Char}(E) = \op{Char}(F).$

\begin{rem} In the case of real vector bundles, the complex infinite Grassmann $G_n(\mathbb C^{\infty})$, the Chern class $c_i$ and the coefficient ring $\mathbb Z$ are respectively replaced by the real infinite Grassmann 
 $G_n(\mathbb R^{\infty})$, the Stiefell--Whitney class $w_i$ and the coefficient ring $\mathbb Z_2$.
\end{rem}

\begin{rem} Instead of homology $H_*(-)$ and cohomology $H^*(-)$, we can consider homotopy version of these, i.e., homotopy groups $\pi_*(-)$ and cohomotopy ``groups" $\pi^*(-)$. In this case we consider the based homotopy set $[X,Y]_*$. We note that the cohomotopy set $\pi^p(X) :=[X, S^p]$ (e.g., see \cite{SHu}). Note that in the case when $p=1$, $\pi^1(X) =[X, S^1] =[X, K(\mathbb Z,1)] =H^1(X;\mathbb Z)$ is an abelian group.
\end{rem}

\begin{rem} For any locally small category $\mathcal C$, in a similar manner as above we can consider a poset-stratified space structure on the hom set $hom_{\mathcal C}(X,Y)$ for any objects $X, Y \in Obj(\mathcal C)$, and using reasonable covariant functor $\mathcal H_*$ and contravariant functor $\mathcal H^*$ on the locally small category $\mathcal C$ we can do similar things as above. For example, derived categories, triangulated categories, and derived functors, etc.
\end{rem}

%%%%%% Gottlieb group
When it comes to the homotopy groups $\pi_*$, we have another application. Let $Map(X,Y;f)$ be the path component of $Map(X,Y)$ containing $f$. Let $*$ be the base point of $X$ and we consider the evaluation map
$$ev:Map(X, Y;f) \to Y \quad ev(g):= g(*).$$
\begin{defn}[\cite{WL}] For a continuous based map $f:X \to Y$, \emph{the $n$-th evaluation subgroup $G_n(Y, X;f)$} of the $n$-th homotopy group $\pi_n(Y)$ is defined as follows:
$$G_n(Y, X;f):= \op{Im} \Bigl ( ev_*: \pi_n(Map(X,Y;f)) \to \pi_n(Y) \Bigr ).$$
\end{defn}
This is a generalized version of the following Gottlieb group $G_n(X)$ (\cite{Gott1, Gott2}):
$$G_n(X):= \op{Im} \Bigl ( ev_*: \pi_n(aut_1X) \to \pi_n(X) \Bigr ),$$
where $aut_1X = Map(X, X; \op{id}_X)$ and $id_X$ is the identity map. 

The $n$-th evaluation subgroup $G_n(Y, X;f)$ can be described as follows:

\begin{lem}[\cite{WL}]
The $n$-th {\it evaluation subgroup} of a continuous based map $f:X\to Y$ is  
$$G_n(Y,X;f):=\Biggl \{ a\in \pi_n(Y) \mid 
\xymatrix{
X\times S^n\ar@{.>}[dr]^{ \exists\phi} & S^n\ar[l]_{\ \ i_{S^n}}\ar[d]^a\\
X \ar [r]_f\ar[u]^{i_X} & Y 
}\mbox{ is homotopy commutative}
\Biggr\}$$
from the adjointness. 
\end{lem}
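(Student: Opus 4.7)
The plan is to prove the lemma by unpacking the definition of $G_n(Y,X;f)$ through the standard exponential-law adjunction between $\mathrm{Map}(S^n, \mathrm{Map}(X,Y))$ and $\mathrm{Map}(X\times S^n, Y)$, and then to reconcile the strict commutativity that comes from adjointness with the homotopy commutativity asserted in the statement.

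First, I would spell out the forward direction. Suppose $a \in G_n(Y,X;f)$; then by definition $a = ev_*[\alpha]$ for some based map $\alpha \colon (S^n, s_0) \to (\mathrm{Map}(X,Y;f), f)$. Define $\phi \colon X \times S^n \to Y$ as the adjoint, $\phi(x,s) := \alpha(s)(x)$. A direct computation gives
\[
\phi \circ i_X (x) = \alpha(s_0)(x) = f(x), \qquad \phi \circ i_{S^n}(s) = \alpha(s)(*) = ev(\alpha(s)),
\]
so $\phi \circ i_X = f$ strictly and $\phi \circ i_{S^n}$ represents $a$. In particular the square is homotopy commutative (actually commutative on the bottom triangle), which shows $a$ lies in the set described on the right-hand side.

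Next I would do the converse. Assume $\phi \colon X \times S^n \to Y$ is given with $\phi \circ i_X \simeq f$ and $\phi \circ i_{S^n} \simeq a$. The step I would carry out first is to rectify the lower-triangle homotopy: using the homotopy extension property for the cofibration $X \times \{s_0\} \hookrightarrow X \times S^n$, I can replace $\phi$ by a homotopic map $\tilde\phi$ satisfying $\tilde\phi \circ i_X = f$ on the nose, while retaining $\tilde\phi \circ i_{S^n} \simeq a$. Now take the exponential-law adjoint $\alpha(s)(x) := \tilde\phi(x,s)$; this gives a continuous map $\alpha \colon S^n \to \mathrm{Map}(X,Y)$ with $\alpha(s_0) = f$. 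Since $S^n$ is path-connected and $\alpha(s_0) = f$, the image lies in the path component $\mathrm{Map}(X,Y;f)$, so $\alpha$ is a based map into $(\mathrm{Map}(X,Y;f), f)$. Finally, $ev \circ \alpha = \tilde\phi \circ i_{S^n}$ represents $a$, hence $a = ev_*[\alpha] \in G_n(Y,X;f)$.

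The only non-formal ingredient is the hypothesis that the exponential law holds as a homeomorphism (e.g.\ working in compactly generated spaces, or with $X$ locally compact Hausdorff) together with the cofibration of the basepoint inclusion $\{s_0\} \hookrightarrow S^n$ which licenses the homotopy-extension adjustment in the converse direction; I expect this rectification step to be the main — though standard — technical point, while the rest is just bookkeeping under adjunction.
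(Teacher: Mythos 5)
Your argument is correct and is exactly the adjointness argument the paper alludes to: the paper gives no proof of this lemma beyond the phrase ``from the adjointness'' (citing Woo--Lee), and your two directions --- taking the exponential adjoint of a sphere in $\mathrm{Map}(X,Y;f)$ hitting $a$ under $ev_*$, and conversely rectifying the lower triangle via the homotopy extension property for $X\times\{s_0\}\hookrightarrow X\times S^n$ before adjointing back --- are the standard way to make that phrase precise. The point-set caveats you flag (validity of the exponential law and the cofibration of the basepoint inclusion) are the right ones, and the rest is the bookkeeping you describe.
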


As to the case of generalized Gottlieb groups, we need to reverse the order.

\begin{pro}
 The following map (called ``the $n$-th generalized Gottlieb evaluation subgroup map")
$$\frak g_n: [X, Y] \to \mathcal S(\pi_n(Y)) \quad G_n([f]):= G_n(Y,X;f)$$
is well-defined, i.e., $f \sim f'$ implies that $G_n(Y,X;f) = G_n(Y,X;f')$.
\end{pro}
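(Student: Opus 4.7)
The plan is to apply the homotopy-commutative diagram characterization from the preceding lemma. Given $a\in G_n(Y,X;f)$, there exists $\phi:X\times S^n \to Y$ such that $\phi\circ i_X \sim f$ and $\phi\circ i_{S^n}$ represents $a$. My aim is to produce $\phi':X\times S^n\to Y$ witnessing that $a\in G_n(Y,X;f')$, so the lemma forces $G_n(Y,X;f)\subseteq G_n(Y,X;f')$; the reverse inclusion then follows by reversing the roles of $f$ and $f'$ in the homotopy.

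First I would fix a based homotopy $H:X\times I \to Y$ from $f$ to $f'$ and define a partial homotopy on the subspace $X\vee S^n = (X\times\{*_{S^n}\})\cup(\{*_X\}\times S^n)\subset X\times S^n$ by using $H$ on the $X$-factor and the constant homotopy at $\phi|_{\{*_X\}\times S^n}$ on the $S^n$-factor. These agree at the wedge point because $H$ is based: $H_t(*_X)=f(*_X)=\phi(*_X,*_{S^n})$ for all $t$.

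Second, I would invoke the homotopy extension property for the cofibration $X\vee S^n\hookrightarrow X\times S^n$ (valid whenever the base points are nondegenerate, as in the CW setting). Extending the above partial homotopy starting from $\phi$ yields $\widetilde H:X\times S^n\times I\to Y$, and the map $\phi':=\widetilde H_1$ satisfies $\phi'\circ i_X = H_1=f'$ while $\phi'\circ i_{S^n}=\phi\circ i_{S^n}$ still represents $a$. By the lemma, $a\in G_n(Y,X;f')$ as required.

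The main obstacle is technical rather than conceptual: ensuring the HEP hypothesis holds and confirming that the implicit base points in $\pi_n(Y)$, namely $f(*_X)$ and $f'(*_X)$, match — which they do automatically for a based homotopy. A conceptually cleaner alternative, which sidesteps the HEP manipulation entirely, is to observe that a based homotopy from $f$ to $f'$ is literally a path in the mapping space, so $Map(X,Y;f)$ and $Map(X,Y;f')$ coincide as path components; the evaluation map then agrees on the nose, and since the connecting path projects under $ev$ to the constant loop at $f(*_X)=f'(*_X)$, the change-of-basepoint isomorphism is trivial and the images of $ev_*$ coincide as subgroups of $\pi_n(Y)$.
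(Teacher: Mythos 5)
Your proposal is correct, but note that the paper states this proposition \emph{without} proof (the proof printed immediately afterwards in the source belongs to the next proposition, about invariance under $\sim_R$), so the authors evidently regard it as immediate; your second, ``conceptually cleaner'' argument is exactly the reason it is immediate. Since $f\simeq f'$ means $f$ and $f'$ lie in the same path component of $Map(X,Y)$, the spaces $Map(X,Y;f)$ and $Map(X,Y;f')$ and the evaluation map $ev$ are literally identical; the only point to check is the basepoint bookkeeping, and as you observe, for a based homotopy the connecting path in the mapping space evaluates to the constant loop at $f(*)=f'(*)$, so the change-of-basepoint isomorphisms are compatible and $\op{Im}(ev_*)$ is unchanged. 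Your first route, via the diagram characterization of $G_n(Y,X;f)$, is also valid but over-engineered: the lemma only requires the square to commute \emph{up to homotopy}, so the very same $\phi$ that witnesses $a\in G_n(Y,X;f)$ (i.e.\ $\phi\circ i_X\simeq f$ and $\phi\circ i_{S^n}\simeq a$) already witnesses $a\in G_n(Y,X;f')$, because $\phi\circ i_X\simeq f\simeq f'$ by transitivity. The homotopy extension argument producing a $\phi'$ with $\phi'\circ i_X=f'$ on the nose is therefore unnecessary (though harmless under the nondegenerate-basepoint hypothesis you impose). That diagrammatic style is, incidentally, the one the paper does use for the subsequent proposition, where one genuinely must modify $\phi$ by precomposing with $s\times 1$; there the extra work is needed, here it is not.
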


\begin{pro}\label{g.gott.map}
The following map (called ``the finer $n$-th generalized Gottlieb evaluation subgroup map")
$$\frak g_n^R: [X, Y]_R \to \mathcal S(\pi_n(Y)) \quad \frak g_n^R([f]_R):= G_n(Y,X;[f])=G_n(Y,X;f)$$
is well-defined, i.e., $[f] \sim_{R} [g]$ implies that $G_n(Y,X;f)=G_n(Y,X;g)$.
Namely the following diagram commutes:
$$\xymatrix
{
[X,Y] \ar [dr]_{\frak g_n} \ar [r]^{\pi_R} & [X,Y]_R \ar [d]^{\frak g_n^R}\\
& \mathcal S(\pi_n(Y)).  & }
$$
\end{pro}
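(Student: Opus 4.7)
The plan is to invoke the homotopy-commutative-square characterization of $G_n(Y,X;f)$ from the preceding Lemma, together with the already-established fact (in the previous Proposition) that $f \sim f'$ implies $G_n(Y,X;f) = G_n(Y,X;f')$. The commutativity of the triangle $\frak g_n = \frak g_n^R \circ \pi_R$ is immediate from the definitions once $\frak g_n^R$ is well-defined, so the real content of the statement is: if $[f] \sim_R [g]$, then $G_n(Y,X;f) = G_n(Y,X;g)$. Unpacking $\sim_R$, we are given $s_1,s_2 \in [X,X]$ with $f \sim g\circ s_1$ and $g \sim f\circ s_2$, and we must identify the two evaluation subgroups.

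The key step I would establish is the following ``precomposition'' inclusion: for any based self-map $s:X \to X$ and any based map $h:X \to Y$,
$$G_n(Y,X;h) \subseteq G_n(Y,X; h\circ s).$$
Indeed, if $a \in G_n(Y,X;h)$ is witnessed by $\phi:X\times S^n \to Y$ with $\phi \circ i_X \sim h$ and $\phi \circ i_{S^n} \sim a$, set $\phi' := \phi \circ (s \times \op{id}_{S^n})$. Then $\phi' \circ i_X = \phi \circ i_X \circ s \sim h\circ s$, and $\phi' \circ i_{S^n} = \phi \circ i_{S^n} \sim a$ because $s$ preserves the base point. Thus $\phi'$ realises $a$ as an element of $G_n(Y,X;h\circ s)$.

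Applying this inclusion with $h = f$ and $s = s_2$, and then combining with the homotopy-invariance $G_n(Y,X; f\circ s_2) = G_n(Y,X; g)$ coming from $g \sim f\circ s_2$, gives $G_n(Y,X;f) \subseteq G_n(Y,X;g)$; the reverse inclusion is symmetric, using $s_1$ and $f \sim g\circ s_1$. The main technical point I expect to watch is basepoints: the Lemma's characterization rests on the pointed inclusions $i_X,i_{S^n}$, so the argument requires $s_1,s_2$ to be base-point preserving (otherwise $\phi' \circ i_{S^n}(t) = \phi(s(\ast),t)$ need not agree on the nose with $\phi\circ i_{S^n}(t)$). Provided $X$ is well-pointed this replacement is harmless, and any residual free-versus-pointed ambiguity is absorbed by the preceding Proposition asserting well-definedness of $\frak g_n$ on the unbased homotopy set $[X,Y]$.
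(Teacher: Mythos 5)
Your proposal is correct and follows essentially the same route as the paper: the heart of both arguments is to take a witness $\phi:X\times S^n\to Y$ for $a\in G_n(Y,X;g)$ and precompose with $s\times 1$ to produce a witness for $a\in G_n(Y,X;f)$ when $f\sim g\circ s$, then symmetrize using the second map $s_2$. Your explicit attention to $s$ being base-point preserving (so that $\phi\circ(s\times 1)\circ i_{S^n}\simeq a$) is a point the paper's proof passes over silently, but it does not change the argument.
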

\begin{proof}
For two maps $f,\ g:X\to Y$, suppose that $f\sim g\circ h$ for some map $s:X\to X$.
Then  $G_n(Y,X;g)\subset G_n(Y,X;f)$.
Indeed, there is the homotopy commutative diagram for $a\in G_n(Y,X;g)$:
$$\xymatrix{
X\ar[dr]^f\ar[rr]^{i_X} \ar[dd]_s && X\times S^n\ar[dd]^(.3){s\times 1} | (.5)\hole\ar@{.>}[ld]_{\psi}&\\
&Y&&S^n\ar[ul]_{i_{S^n}}\ar[dl]^{i_{S^n}}\ar[ll]^(.3)a\\
X \ar [rr]_{i_X}\ar[ru]_{g} &&  X\times S^n\ar[ul]_{ \exists\phi} &
}
$$
by $\psi:=\phi\circ (s\times 1)$.
Then $\psi\circ i_X\simeq f$ and $\psi\circ i_{S^n}\simeq a$.
Hence $a\in G_n(Y,X;f)$.
Furthermore,  suppose that $g\sim f\circ s'$.
Then similarly we obtain $G_n(Y,X;f)\subset G_n(Y,X;g)$.
\end{proof}

As a corollary of the above proof, we have the following
\begin{thm}
\begin{enumerate}
\item If $[f] \leqq_R [g]$, then we have $G_n(Y,X;g)\subset G_n(Y,X;f)$, i.e., $\frak g_n([g]) \leqq \frak g_n([f]).$ Hence

$\frak g_n: ([X,Y], \leqq^{op} _R) \to \mathcal Sub(\pi_n(Y), \leqq)$ is a monotone map.
\item 
If $[f]_R\leqq[g]_R$, then we have $G_n(Y,X;g)\subset G_n(Y,X;f)$, i.e., $\frak g_n^R([g]_R) \leqq \frak g_n^R([f]_R).$ Hence

$\frak g_n^R: ([X,Y]_R, \leqq'^{op} _R) \to \mathcal Sub(\pi_n(Y), \leqq)$ is a monotone map.

\end{enumerate}

We also have the following commutative diagram:
\begin{equation}\label{cd}
\xymatrix
{
([X,Y], \leqq^{op}_R) \ar[d]_{\op{id}_{[X,Y]}} \ar [r]^{\pi_R}  & ([X,Y]_R, \leqq'^{op}_R) \ar[d]_{\frak g_n^R}\\
([X,Y], \leqq^{op}_R) \ar[r]_{\frak g_n} & (\mathcal Sub(\pi_n(Y)), \leqq) }.
\end{equation} 
\end{thm}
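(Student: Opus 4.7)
The plan is to observe that essentially everything needed has already been established in the proof of Proposition~\ref{g.gott.map}, and this theorem is simply a repackaging of that proof's content into the language of monotone maps between (pre)ordered sets. I would therefore present the argument as a genuine corollary rather than restarting the diagram chase.

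For part (1), suppose $[f] \leqq_R [g]$, i.e., there exists $[s] \in [X,X]$ with $f \sim g \circ s$. Inspecting the proof of Proposition~\ref{g.gott.map} (which treats exactly the hypothesis $f \sim g \circ s$ as its starting point), we see that the construction $\psi := \phi \circ (s \times 1)$ shows that any $a \in G_n(Y,X;g)$ lies in $G_n(Y,X;f)$. Hence $G_n(Y,X;g) \subseteq G_n(Y,X;f)$, which in the partial order $\leqq$ on $\mathcal{S}ub(\pi_n(Y))$ by inclusion reads $\frak g_n([g]) \leqq \frak g_n([f])$. Reversing the source order to $\leqq^{op}_R$ then yields monotonicity: $[f] \leqq^{op}_R [g]$ (that is, $[g] \leqq_R [f]$) implies $\frak g_n([f]) \leqq \frak g_n([g])$.

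For part (2), the map $\frak g_n^R$ is already known to be well-defined by Proposition~\ref{g.gott.map} and to satisfy $\frak g_n^R \circ \pi_R = \frak g_n$. If $[f]_R \leqq'_R [g]_R$, then by the very definition of $\leqq'_R$ we have $[f] \leqq_R [g]$, so part (1) gives $G_n(Y,X;g) \subseteq G_n(Y,X;f)$, i.e.\ $\frak g_n^R([g]_R) \leqq \frak g_n^R([f]_R)$. Reversing to $\leqq'^{op}_R$ yields monotonicity of $\frak g_n^R$ as claimed.

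Finally the commutativity of diagram~(\ref{cd}) is the identity $\frak g_n^R \circ \pi_R = \frak g_n$, which holds tautologically from the definitions $\frak g_n^R([f]_R) = G_n(Y,X;f) = \frak g_n([f])$ and $\pi_R([f]) = [f]_R$; the order reversal on the left column is compatible with the order reversal on the right because $\pi_R$ is already known to be monotone for $\leqq_R \to \leqq'_R$ and hence also for $\leqq^{op}_R \to \leqq'^{op}_R$. The only subtle point, and the one I would take care to flag explicitly, is that the inclusion $G_n(Y,X;g) \subseteq G_n(Y,X;f)$ runs \emph{opposite} to the direction of the hypothesis $[f] \leqq_R [g]$, which is precisely why the opposite preorders $\leqq^{op}_R$ and $\leqq'^{op}_R$ appear on the sources of $\frak g_n$ and $\frak g_n^R$; this is not a real obstacle but it is the one place where a careless statement would fail.
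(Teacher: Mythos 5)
Your proof is correct and follows exactly the paper's route: the paper gives no separate argument for this theorem but states it as a corollary of the proof of Proposition~\ref{g.gott.map}, whose construction $\psi := \phi\circ(s\times 1)$ yields the inclusion $G_n(Y,X;g)\subseteq G_n(Y,X;f)$ from $f\sim g\circ s$, with the rest being definition-chasing. Your explicit flagging of the order reversal (the reason the opposite preorders appear on the sources) is the right point to emphasize and is consistent with the paper's statement.
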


%%%%%%%%%%%%%%%%%%
\begin{cor}  
\begin{enumerate}
\item
For $\forall S \in Obj(h\mathcal Top)$, we have a {\bf \emph{covariant}} functor
$\frak {st}^S_{Gott}: h\mathcal Top \to \mathcal Strat$
such that
\begin{enumerate}
\item for each object $X \in Obj(h\mathcal Top)$, 

$\frak {st}^S_{Gott}(X) :=  \left ( ([S,X], \tau_{\leqq^{op}_R}), ([S,X], \tau_{\leqq^{op}_R}) \xrightarrow {\frak g_n} \left (\mathcal Sub(\pi_n(X)), \leqq \right) \right)$.
\item for a morphism $[f] \in [X,Y]$ , $\frak {st}^S_{Gott}([f])$ is the following commutative diagram:
$$
\xymatrix
{
([S,X], \tau_{\leqq^{op}_R}) \ar [r]^{\frak g_n \quad } \ar [d]_{f_*} & \left (\mathcal Sub(\pi_n(X)), \leqq \right) \ar [d]^{f_*} \\
([S,Y], \tau_{\leqq^{op}_R})  \ar[r]_{\frak g_n \quad } & \left (\mathcal Sub(\pi_n(Y)), \leqq \right).
}
$$
\end{enumerate}
\item $\frak g_n^R$ gives rise to a natural transformation $\frak g_n^R: \frak {st}^S_*(-) \to \frak {st}^S_{Gott}(-)$, namely for a morphism $[f] \in [X, Y]$ we have the following commutative diagram:
$$
\xymatrix
{
\frak {st}^S_*(X) \ar [d]_{f_*} \ar [r]^{\frak g_n^R} & \frak {st}^S_{Gott}(X)\ar[d]^{f_*} \\
\frak {st}^S_*(Y)  \ar[r]_{\frak g_n^R} & \frak {st}^S_{Gott}(Y)}.
$$
Namely we have the following commutative cube:
$$ \small
\xymatrix
{ ([S,X], \tau_{\leqq^{op}_R}) \ar[dd]_{f_*} \ar[rd]^{\pi_R} \ar[rr]^{\op{id}_{[S,X]}} && ([S,X], \tau_{\leqq^{op}_R}) \ar'[d][dd]^{f_*} \ar[rd]^{\frak g_n} \\
& ([S,X]_R, \leqq'^{op}_R) \ar[dd]_{f_*} \ar[rr] ^{\frak g_n^R \quad \quad \quad \quad  }  &&  \left (\mathcal Sub(\pi_n(X)), \leqq \right) \ar[dd]^{f_*} \\
([S,Y], \tau_{\leqq^{op}_R}) \ar'[r] [rr]_{\op{id}_{[S,Y]} \quad \quad }  \ar[rd]_{\pi_R} && ([S,Y], \tau_{\leqq^{op}_R})  \ar[rd]_{\frak g_n} \\
& ([S,Y]_R, \leqq'^{op}_R) \ar[rr] _{\frak g_n^R \quad \quad  \quad }  &&  \left (\mathcal Sub(\pi_n(Y)), \leqq \right).
}
$$
\end{enumerate}
\end{cor}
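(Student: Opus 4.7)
The plan is to package Proposition \ref{g.gott.map} and the preceding Theorem into categorical form. The continuity of $\frak g_n$ as a map between Alexandroff spaces is already established (monotonicity implies Alexandroff-continuity), so each $\frak{st}^S_{Gott}(X)$ is a valid poset-stratified space. Likewise, the commutative diagram \eqref{cd} supplies the bulk of the naturality, and what remains reduces to functoriality checks for post-composition and subgroup push-forward.

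For Part (1), given $[f] \in [X, Y]$, I would verify that the square in (1)(b) is a morphism in $\mathcal{Strat}$. Horizontal continuity is clear: $f_*: ([S,X], \tau_{\leqq^{op}_R}) \to ([S,Y], \tau_{\leqq^{op}_R})$ is monotone because $[g_1] \leqq_R [g_2]$ implies $[f \circ g_1] \leqq_R [f \circ g_2]$, and $f_*: \mathcal{Sub}(\pi_n(X)) \to \mathcal{Sub}(\pi_n(Y))$ preserves inclusions tautologically. Commutativity of the square reduces, for each $[g] \in [S, X]$, to the identity $f_*(G_n(X,S;g)) = G_n(Y,S;f\circ g)$. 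The inclusion $\subseteq$ is immediate: any witnessing extension $\phi: S \times S^n \to X$ for $a \in G_n(X,S;g)$ yields $f\circ\phi$, which witnesses $f_*(a) \in G_n(Y,S;f\circ g)$. Functoriality (preservation of identities and compositions) then follows from the corresponding properties of post-composition.

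For Part (2), the commutative cube decomposes into six faces. The front face is trivial ($\op{id}$ on top and bottom, $f_*$ on the sides). The top and bottom faces are the defining factorization $\frak g_n = \frak g_n^R \circ \pi_R$. The left face is precisely diagram \eqref{cd} from the preceding Theorem, expressing naturality of $\pi_R$ with respect to $f_*$. The right face is the square established in Part (1)(b). The back face---the analogue of the right face at the level of $[\cdot]_R$---follows from the right-face identity passed through the quotient by $\sim_R$, using that $\frak g_n^R$ is well-defined by Proposition \ref{g.gott.map}.

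The main obstacle is the potential failure of the reverse inclusion $G_n(Y,S;f\circ g) \subseteq f_*(G_n(X,S;g))$ needed for strict commutativity: a witness $\psi: S \times S^n \to Y$ for $b \in G_n(Y,S;f\circ g)$ need not factor through $f$. I would address this by either imposing a lifting hypothesis on $f$ (for instance, that $f$ is a fibration), or by interpreting commutativity in $\mathcal{Strat}$ in the lax, Alexandroff-ordered sense $q \circ \pi \leqq \pi' \circ f$ rather than as strict equality; under the latter reading, the one-sided inclusion suffices and the whole corollary follows directly from the preceding results.
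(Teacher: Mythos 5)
Your assembly of the corollary from the preceding proposition and theorem is the same route the paper implicitly takes (it offers no separate proof), and your verification of the monotonicity/continuity claims and of the faces of the cube is fine. But the ``obstacle'' you flag at the end is not a defect of your argument --- it is a genuine gap in the corollary as stated. Strict commutativity of the square in (1)(b) requires $f_*\bigl(G_n(X,S;g)\bigr) = G_n(Y,S;f\circ g)$ for every $[g]\in[S,X]$, and only the inclusion $\subseteq$ holds, exactly as you observe. Equality already fails when $f$ is a constant map: then $f\circ g$ is constant, so $G_n(Y,S;f\circ g)=\pi_n(Y)$ (the paper itself notes that $G_*(Y,S;*)$ is the maximal element of $\mathcal G(S,Y)$), while $f_*\bigl(G_n(X,S;g)\bigr)=0$. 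Since the paper's definition of a morphism in $\mathcal Strat$ demands that the square commute on the nose, the diagram in (1)(b) is not in general a morphism of poset-stratified spaces, and the same problem propagates to the right and back faces of the cube in (2). Contrast this with the earlier homology corollary, which does go through because $\op{Im}\bigl((f\circ g)_*\bigr)=f_*\bigl(\op{Im}(g_*)\bigr)$ holds exactly.

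Of your two proposed repairs, the lax reading ($f_*\circ\frak g_n\leqq \frak g_n\circ f_*$ pointwise) is the right one: it is precisely what your one-sided inclusion delivers, and it is all that can be salvaged without further hypotheses. The fibration hypothesis is unlikely to suffice as stated: producing a witness in $X$ from a witness $\psi:S\times S^n\to Y$ for $f\circ g$ means lifting $\psi$ through $f$ with prescribed behaviour on $S\times\{*\}$, which is an obstruction-theoretic problem rather than a consequence of the homotopy lifting property, because $S\times S^n$ does not deformation retract onto $S\times\{*\}$. One small cosmetic point: the square you call the ``left face'' (naturality of $\pi_R$ with respect to $f_*$) comes from the first Theorem on $\frak{st}^S_*$, whereas the displayed diagram with $\op{id}$ on the left encodes the factorization $\frak g_n=\frak g_n^R\circ\pi_R$, i.e.\ the top and bottom faces; this does not affect the substance of your decomposition.
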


%%%%%%%%%%%%%%

\begin{rem} When it comes to the case $[X,Y]_L$ we do not have similar results as above.
\end{rem}

Let $G_*(Y, X;f):= \bigoplus_n G_n(Y,X;f) \subset \pi_*(Y):=\bigoplus\pi_n(Y)$. We let
$$\mathcal G(X,Y):= \{ G_*(Y, X; f) \, | \, f \in Map(X, Y) \}$$ be the poset with the partial order by the inclusions $G_*(Y,X;g)\subset G_*(Y,X;f)$ for some maps $f$ and $g$ from $X$ to $Y$.
Then $\pi_*(Y)=G_*(Y,X;*)$ is the maximal element of  ${\mathcal G}(X,Y)$.
In particualr, when $X=Y$, the Gottlieb group $G_*(X) :=G_*(X,X;id_X)$ is the minimal element of ${\mathcal G}(X,X)$.
Thus
\begin{cor}
The map  $G:([X,Y]_R, \leqq'^{op}_R) \to ({\mathcal G}(X,Y), \leqq)$ given by $G([f]_R)=G(f):=G_*(Y,X;f)$ is a poset map.
\end{cor}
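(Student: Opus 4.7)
The plan is to derive this corollary directly from Proposition \ref{g.gott.map} and the preceding theorem by simply assembling the graded pieces $G_n$ degree-by-degree.

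First I would verify well-definedness of $G$ on $[X,Y]_R$. If $[f]_R = [g]_R$, i.e.\ $[f] \sim_R [g]$, then Proposition \ref{g.gott.map} gives $\frak g_n^R([f]_R) = \frak g_n^R([g]_R)$, that is, $G_n(Y,X;f) = G_n(Y,X;g)$ for every $n \geq 0$. Taking the direct sum inside $\pi_*(Y) = \bigoplus_n \pi_n(Y)$ yields $G_*(Y,X;f) = G_*(Y,X;g)$, so $G([f]_R)$ depends only on the class $[f]_R$ and the map $G$ is well-defined.

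Next, for monotonicity, I would unpack the reversed order: by definition $[f]_R \leqq'^{op}_R [g]_R$ means $[g]_R \leqq'_R [f]_R$, and the preceding theorem (applied in each degree $n$) then gives $G_n(Y,X;f) \subseteq G_n(Y,X;g)$ for every $n$. Summing over $n$,
\[
G_*(Y,X;f) = \bigoplus_n G_n(Y,X;f) \;\subseteq\; \bigoplus_n G_n(Y,X;g) = G_*(Y,X;g),
\]
which is exactly $G([f]_R) \leqq G([g]_R)$ in $({\mathcal G}(X,Y), \leqq)$. Hence $G$ is a poset map.

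I do not anticipate any substantive obstacle, since both the well-definedness and the order comparison for each graded piece $G_n$ have already been established. The only point requiring care is notational: the reversal from $\leqq'_R$ to $\leqq'^{op}_R$ on the source is precisely what turns the order-reversing behaviour of each $\frak g_n^R$ into an honest order-preserving map $G$, and without this reversal the statement would fail.
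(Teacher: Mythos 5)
Your proposal is correct and follows essentially the same route as the paper, which presents this corollary as an immediate consequence of Proposition \ref{g.gott.map} and the preceding theorem: well-definedness and the inclusion $G_n(Y,X;f)\subseteq G_n(Y,X;g)$ are established degree-by-degree and then assembled into $G_*=\bigoplus_n G_n$. Your remark about the role of the reversed order $\leqq'^{op}_R$ in converting the order-reversing behaviour of $\frak g_n^R$ into a monotone map is exactly the point the paper makes implicitly.
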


\begin{ex}
Let $X=S^n$ and $Y=(S^n\times S^n)_0$ for an even integer $n$.
Here $(S^n\times S^n )_0$ is the rationalization of $S^n\times S^n$ \cite{HMR}.
Then $[X,Y]_R=\bQ \oplus \bQ/\sim_R \ =P^1(\bQ )\cup (0,0)$ as a set 
with $(a,b)\sim_R(a',b')$ when $a'=ka$ and $b'=kb$ for some $k\in \bQ-0$.
It is  ordered only by
$[a,b]<(0,0)$ for any $[a,b]\in P^1(\bQ)$.  
On the other hand, ${\mathcal G}(X,Y)$ is the set of four points whose order is given as the Hasse diagram: 
$$\xymatrix{
 &G(i_1+i_2)=0&\\
G(i_1)=0 \oplus \bQ\ar@{-}[ur]&& G(i_2)=\bQ \oplus 0\ar@{-}[ul]\\
&G(*)=\bQ \oplus \bQ \ar@{-}[ul]\ar@{-}[ur]&\\
} $$
for the $k$-factor inclusion $i_k:S^n\to (S^n\times S^n)_0$ and the constant map $*$.
Then the poset map $G:[X,Y]_R\to {\mathcal G}(X,Y)$ is given by
$G((0,0))=\bQ \oplus \bQ$, $G([1,0])=0 \oplus \bQ$, $G([0,1])=\bQ \oplus 0$ and $G([a,b])=0$ when $ab\neq 0$.
%There is a poset map $(\bZ \oplus \bZ,<)\to $
\end{ex}

%%%%%%%%%%%%%%%%%%%%%%%%%%%%%%%%coGottlieb
\begin{defn}\cite[Definition 2.1]{Yoon}
The $n$-th {\it generalized dual Gottlieb set} of a map $f:X\to Y$ is  
$$G^n(X,f,Y):=\Biggl \{ a\in H^n(X) \mid 
\xymatrix{
X\ar[d]_f\ar[rr]^{(f\times a)\circ \Delta\ \ \ }\ar@{.>}[drr]^{ \exists\phi} && \ \ Y\times K({\mathbb Z}, n)\\
Y \ar [rr]_{i_Y\ \ \ } &&\ \  Y \vee K({\mathbb Z} ,n)\ar[u]_{incl. }
}
\, \, \text{is homotopy commutative} \Biggr \}
$$
%{\ \ \ \ \ \ \ \ \ \ \ \ \ \ \ \ \ \ \ \ \ \ \ \ \ \ \ \ \  is homotopy commutative}$\}$
for the diagonal map $\Delta :X\to X\times X$.
\end{defn}

\begin{pro}\label{dg.gott.map}
The following map (called ``the finer $n$-th generalized dual Gottlieb  map")
$$\frak g^n_L: [X, Y]_L \to \mathcal S(H^n(X)) \quad \frak g^n_L([f]_L):= G^n(X,f,Y)$$
is well-defined, i.e., $[f] \sim_{L} [g]$ implies that $G^n(X,f,Y)=G^n(X,g,Y)$.
Namely the following diagram commutes:
$$\xymatrix
{
[X,Y] \ar [dr]_{\frak g^n} \ar [r]^{\pi_L} & [X,Y]_L \ar [d]^{\frak g^n_L}\\
& \mathcal S(H^n(X))  & }
$$
\end{pro}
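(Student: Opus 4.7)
The plan is to dualize the argument of Proposition \ref{g.gott.map}. Where the Gottlieb case used a precomposition $s \times \mathrm{id}_{S^n}$ on the source product $X \times S^n$, the dual Gottlieb case should use a postcomposition $t \vee \mathrm{id}_{K(\mathbb{Z}, n)}$ on the target wedge $Y \vee K(\mathbb{Z}, n)$. Symmetry in $f$ and $g$ reduces the task to proving a single inclusion.

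Unpacking $[f] \sim_L [g]$ via Lemma \ref{modified version}, I extract $t_1, t_2 : Y \to Y$ with $f \simeq t_1 \circ g$ and $g \simeq t_2 \circ f$. To prove $G^n(X, g, Y) \subseteq G^n(X, f, Y)$, I take a witness $\phi : X \to Y \vee K(\mathbb{Z}, n)$ for $a \in G^n(X, g, Y)$ and define
\[
\phi' := (t_1 \vee \mathrm{id}_{K(\mathbb{Z}, n)}) \circ \phi.
\]
The verification rests on the naturality of the wedge-into-product inclusion,
\[
\mathrm{incl} \circ (t_1 \vee \mathrm{id}_{K(\mathbb{Z}, n)}) = (t_1 \times \mathrm{id}_{K(\mathbb{Z}, n)}) \circ \mathrm{incl},
\]
which is a standard formal property. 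Chasing the diagram yields
\[
\mathrm{incl} \circ \phi' \simeq (t_1 \times \mathrm{id}) \circ (g \times a) \circ \Delta = ((t_1 \circ g) \times a) \circ \Delta \simeq (f \times a) \circ \Delta,
\]
and the compatibility with $i_Y$ is analogous: from $\phi$ being compatible with $g$ one deduces by the same naturality that $\phi'$ is compatible with $f$, since $(t_1 \vee \mathrm{id}) \circ i_Y = i_Y \circ t_1$. The reverse inclusion $G^n(X,f,Y) \subseteq G^n(X,g,Y)$ is obtained symmetrically, using $t_2$ in place of $t_1$.

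The expected difficulty is small: the only genuinely non-mechanical step is recognizing that the correct \emph{dual} of the right-action $s \times \mathrm{id}$ on the domain is the left-action $t \vee \mathrm{id}$ on the codomain, and that one must extend $t_1$ across the wedge by the identity on $K(\mathbb{Z}, n)$. Once this dualization is identified, the proof is a routine diagram chase parallel to the Gottlieb case.
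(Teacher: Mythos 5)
Your proposal is correct and is essentially the paper's own proof: the authors likewise take a witness $\psi$ for $a$ and post-compose with $s\vee 1$ on the wedge $Y\vee K(\mathbb Z,n)$, using $\mathrm{incl}\circ(s\vee 1)=(s\times 1)\circ\mathrm{incl}$ to transfer the factorization from one map to the other, and then invoke symmetry for the reverse inclusion. The dualization you identify (replacing $s\times\mathrm{id}$ on the source product by $t\vee\mathrm{id}$ on the target wedge) is exactly the step the paper takes.
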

\begin{proof}
For two maps $f,\ g:X\to Y$, suppose that $g\sim s\circ f$ for some map $s:Y\to Y$.
Then  $G^n(X,f,Y)\subset G^n(X,g,Y)$.
Indeed, there is the homotopy commutative diagram for $a\in G^n(X,f,Y)$:
$$\xymatrix{
Y\ar[rr]^{i_Y} \ar[dd]_s && Y\vee K(\bZ ,n)\ar[dd]^(.3){s\vee 1} | (.5)\hole&\\
&X\ar[ul]_f\ar[dl]^g\ar@{.>}[rd]_{ \phi}\ar[ru]^{\exists \psi}\ar[rr]_{\ \ \ a}&&K(\bZ,n)\ar[ul]_{i_{K}}\ar[dl]^{i_{K}}\\
Y \ar [rr]_{i_Y} &&  Y\vee K(\bZ,n)&
}
$$
by $\phi:= (s\vee 1)\circ \psi$.
Then $i_X\circ g\simeq \phi$ and $i_{S^n}\circ a\simeq \phi$.
Hence $a\in G^n(X,g,Y)$.
Furthermore,  suppose that $f\sim s'\circ g$.
Then similarly we obtain $G^n(X,g,Y)\subset G^n(X,f,Y)$.
\end{proof}

\begin{rem}
For  generalized dual Gottlieb  sets, we obtain similar properties as evaluation subgroups. 
\end{rem}

%%%%%%%%%%%%%%%%%%%%%%%%%%%%%% cat

\begin{ex}
Let ${\rm cat}(f)$ be the Lusternik--Schnirelmann category of a map $f:X\to Y$ (\cite[p.352]{FHT}). Then $\op{cat}:[X,Y] \to (\bZ_{\geqq 0},\leqq )$ is a monotone map.
In the case of cat, we have the three finer poset-stratified space structure on the reversed ordered posets $[X, Y]_R$, $[X,Y]_L$ and $[X,Y]_{LR}$ as follows:
\begin{enumerate}
\item If $[g] \leqq_R [f]$, i.e., $g\sim  f\circ s$ with $s:X \to X$, then we have (\cite[Lemma 27.1(ii)]{FHT})
$${\rm cat}(g)= {\rm cat}(f\circ s)\leqq \min \{ {\rm cat}(f), {\rm cat}(s)\}\leqq {\rm cat}(f).$$
Hence we have  ${\rm cat}(g)\leqq {\rm cat}(f)$.
Thus there is a poset map ${\rm cat}_R:[X,Y]_R \to (\bZ_{\geqq 0},\leqq ) $. Here $\op{cat}_R([f]_R):=\op{cat}(f).$ 

\item If $[g] \leqq_L [f]$, i.e., $g\sim  t \circ f$ with $t:Y \to Y$, then we have
$${\rm cat}(g)= {\rm cat}(t\circ f)\leq \min \{ {\rm cat}(t), {\rm cat}(f)\}\leqq {\rm cat}(f).$$
Hence we have  ${\rm cat}(g)\leqq {\rm cat}(f)$.
Thus ${\rm cat}_L:[X,Y]_L\to (\bZ_{\geqq 0},\leqq ) $ is a poset map. Here $\op{cat}_L([f]_R):=\op{cat}(f).$ 

\item If $[g] \leqq_{LR} [f]$, i.e., $g\sim h\circ f\circ s$ with $s:X \to X$ and $t:Y \to Y$, then we have
$${\rm cat}(g)= {\rm cat}(t\circ f\circ s)\leq \min \{ {\rm cat}(t), {\rm cat}(f), {\rm cat}(s)\}\leqq {\rm cat}(f).$$
Hence we have  ${\rm cat}(g)\leqq {\rm cat}(f)$.
Thus ${\rm cat}_{LR}:[X,Y]_{LR}\to (\bZ_{\geqq 0},\leqq ) $ is a poset map. Here $\op{cat}_{LR}([f]_R):=\op{cat}(f).$ 
\end{enumerate}
%\begin{enumerate}
%\item When $[g] \leqq_R [f]$, 
% from
%$${\rm cat}(g)= {\rm cat}(f\circ s)\leqq \min \{ {\rm cat}(f), {\rm cat}(s)\}\leqq {\rm cat}(f)$$
%for some map $s:X\to X$ with $g\sim  f\circ s$ (\cite[Lemma 27.1(ii)]{FHT}), we have  ${\rm cat}(g)\leqq {\rm cat}(f)$.
%Thus there is a poset map ${\rm cat}_R:[X,Y]_R \to (\bZ_{\geqq 0},\leqq ) $. Here $\op{cat}_R([f]_R):=\op{cat}(f).$ 

%\item When  $[g] \leqq_L [f]$, 
% from
%$${\rm cat}(g)= {\rm cat}(t\circ f)\leq \min \{ {\rm cat}(t), {\rm cat}(f)\}\leqq {\rm cat}(f)$$
%for some map $t:Y\to Y$ with $g\sim t\circ f$, we have  ${\rm cat}(g)\leqq {\rm cat}(f)$.
%Thus ${\rm cat}_L:[X,Y]_L\to (\bZ_{\geqq 0},\leqq ) $ is a poset map. Here $\op{cat}_L([f]_R):=\op{cat}(f).$ 

%\item When  $[g] \leqq_{LR} [f]$, 
% from
%$${\rm cat}(g)= {\rm cat}(t\circ f\circ s)\leq \min \{ {\rm cat}(t), {\rm cat}(f), {\rm cat}(s)\}\leqq {\rm cat}(f)$$
%for some maps $t:Y\to Y$ and $s:X\to X$ with $g\sim h\circ f\circ s$, we have  ${\rm cat}(g)\leqq {\rm cat}(f)$.
%Thus ${\rm cat}_{LR}:[X,Y]_{LR}\to (\bZ_{\geqq 0},\leqq ) $ is a poset map. Here $\op{cat}_{LR}([f]_R):=%\op{cat}(f).$ 
%\end{enumerate}

Namely we have the following commutative diagrams:
\begin{equation*}
\xymatrix{
[X,Y] \ar [r]^{\pi_R} \ar [d]_{\op{id}_{[X,Y]}} & [X,Y]_R \ar [d]^{\op{cat}_R} \\
[X,Y]  \ar[r]_{\op{cat}} & (\bZ_{\geqq 0},\leqq ), 
}
\xymatrix{
[X,Y]  \ar [r]^{\pi_L} \ar [d]_{\op{id}_{[X,Y]}} & [X,Y]_L \ar [d]^{\op{cat}_L} \\
[X,Y]  \ar[r]_{\op{cat}} & (\bZ_{\geqq 0},\leqq ),
}
\xymatrix{
[X,Y]  \ar [r]^{\pi_{LR}} \ar [d]_{\op{id}_{[X,Y]}} & [X,Y]_{LR} \ar [d]^{\op{cat}_{LR}} \\
[X,Y]  \ar[r]_{\op{cat}} & (\bZ_{\geqq 0},\leqq ) .
}
\end{equation*} \\

\end{ex}

%\begin{thm}
%If $[f]_L=[g]_L$, $G_n(Y,X;f)=G_n(Y,X;g)$.
%Furthermore if $[g]_L<[f]_L$,  there is a map $G_n(Y,X;g)\to G_n(Y,X;f)$.
%\end{thm}
%\begin{proof}
%Suppose that $f\simeq h\circ g$.
%Then  there is the map $h_*:G_n(Y,X;g)\subset G_n(Y,X;f)$ 
%given by $h_*(a)=h\circ a$. 
%\end{proof}

\begin{rem} Finally we remark that the referee pointed out that our machinery might be relevant to, for example, the following examples:
\begin{enumerate}
\item The theorem of Dehornoy (\cite{D1,D2,D3}) about natural orders on braid groups (e.g., see \cite{FGRRW}), which has given rise to considerable activity in low-dimensional topology, such as generalizations to knot group,
\item Elmendorf's theorem in equivariant homotopy theory, which describes $G$-equivariant homotopy types in terms of fixed-point spaces indexed by the orbit category of homogeneous spaces $G/H$ and $G$-maps between them (e.g., see \cite{May}): this yields natural stratifications of $G$-spaces,
\item Some related connections between homotopy theory and (equivariant) posets, e.g., such as a theorem saying that the category of ($G$-)posets admits a model structure that is Quillen equivalent to the standard model structure on the category of topological ($G$-)spaces\footnote{In \cite[page 83]{MZS} they write ``This implies that all of the algebraic topology of spaces can in principle be worked out in the category of posets. It can also be viewed as a bridge between the combinatorics of partial orders and algebraic topology."}  (e.g., see \cite{MZS}, \cite{R}, \cite{Thomason}).
\end{enumerate}
Furthermore the referee pointed out that he/she suspects that in the long run such poset structures will find an interpretation as part of Connes--Consani's recent theory \emph{``Homological algebra in characteristic one"} \cite{CC}.

In this paper we deal with only the homotopy set $[X,Y]$. However, if other things, e.g., the above examples and Connes--Consani's recent theory, are relevant to our machinery, then it would be quite interesting.\\
\end{rem}

\noindent
{\bf Acknowledgements:} We would like to thank Jim Stasheff for informing us of papers by K. Borsuk and P. Hilton, and also the referee for his/her pointing out those interesting/intriguing examples or works and Connes--Consani's recent theory.

\end{document}